\theoremstyle{plain}
\newtheorem{theorem}{Theorem}[section]
\newtheorem{proposition}[theorem]{Proposition}
\newtheorem*{cor}{Corollary}
\theoremstyle{remark}
\newtheorem*{rem}{Remark}
\newtheorem*{rems}{Remarks}
\newcommand{\abs}[1]{\left\lvert#1\right\rvert}
\newcommand{\set}[1]{\{\, #1\,\}}
\newcommand{\N}{\mathbb N}
\newcommand{\Q}{\mathbb Q}
\newcommand{\R}{\mathbb R}
\newcommand{\Z}{\mathbb Z}
\newcommand{\C}{\mathbb C}
\newcommand{\rbar}{\overline{\R}}
   \def\th@plain{\slshape}
\begin{document}

\title
%[Expansions of the real field by canonical products]
{Expansions of the real field by canonical products}

\author
%[Chris Miller]
{Chris Miller}
\address
{Department of Mathematics\\
Ohio State University\\
231 West~18th Avenue\\
Columbus, Ohio 43210, USA}
\email{miller@math.osu.edu}

\author
%[Patrick Speissegger]
{Patrick Speissegger}
\address
{Department of Mathematics \& Statistics\\
McMaster University\\
1280 Main Street West\\
Hamilton, Ontario, Canada L8S 4K1}
\email{speisseg@math.mcmaster.ca}

\thanks{\today. Some version of this document is to appear in the Canadian Mathematical Bulletin.}

\thanks
{Research of Miller partially supported by the Simons Foundation
and by the Mathematisches Forschungsinstitut Oberwolfach (MFO).
Research of Speissegger supported by NSERC of Canada Discovery Grant RGPIN 261961 and the Zukunftskolleg of the University of Konstanz.
Preliminary versions of many of the results herein were announced by Miller at the Fields Institute for Research in Mathematical Sciences
(August 2016) and at MFO (April--May 2017).}

\begin{abstract}
We consider expansions of 
o\nobreakdash-\hspace{0pt}minimal structures on the real field by collections of restrictions to the positive real line of the canonical Weierstrass products associated to sequences such as $(-n^s)_{n>0}$ (for $s>0$) and $(-s^n)_{n>0}$ (for $s>1$), and also expansions by associated functions such as logarithmic derivatives.
There are only three possible outcomes known so far: 
(i)~the expansion is o\nobreakdash-\hspace{0pt}minimal (that is, definable sets have only finitely many connected components); 
(ii)~every Borel subset of each $\R^n$ is definable; 
(iii)~the expansion is interdefinable with a structure of the form $(\mathfrak{R}',\alpha^\mathbb{Z})$ where $\alpha>1$, $\alpha^\mathbb{Z}$ is the set of all integer powers of $\alpha$, and $\mathfrak{R}'$ is o\nobreakdash-\hspace{0pt}minimal and defines no irrational power functions.   
\end{abstract}
 
\keywords{o\nobreakdash-\hspace{0pt}minimal, 
d\nobreakdash-\hspace{0pt}minimal, 
Assouad dimension,
Weierstrass product, 
Gevrey asymptotics}

\subjclass[2010]{Primary 03C64; Secondary 30D10, 30D15, 30D60}

\maketitle

\section{Introduction}
This work addresses interplay between special functions and model theory (a branch of mathematical logic).
The reader is assumed to be familiar with first-order definability theory over $\rbar:=(\R,+,\cdot,(r)_{r\in\R})$ (the field of real numbers with constants), say, as exposed in van den Dries and Miller~\cite{geocat}.
Throughout, $\Gamma$ indicates the complex Gamma function and $\zeta$ the complex Riemann zeta function.
The set of nonnegative integers is denoted by $\N$.
The symbol $\restriction$ indicates restriction of functions.

A little over twenty years ago, van den Dries and Speissegger established in~\cite{multisummable} that the expansion of $\rbar$ by $\Gamma{\restriction} (0,\infty)$ is o\nobreakdash-\hspace{0pt}minimal (that is, every definable set has only finitely many connected components).
This was accomplished by showing that a larger structure, denoted here by $(\R_\mathcal G,e^x)$, is o\nobreakdash-\hspace{0pt}minimal and defines $\Gamma{\restriction} (0,\infty)$.
Also shown in~\cite{multisummable} is that no restriction of $\zeta$ to any open ray $(c,\infty)$ is definable in $(\R_\mathcal G,e^x)$.
But the bulk of the work in~\cite{multisummable} goes toward obtaining model completeness and o\nobreakdash-\hspace{0pt}minimality of $(\R_\mathcal G,e^x)$, and not much more was done regarding what other special functions are (or are not) definable in $(\R_\mathcal G,e^x)$ or reducts thereof (see~\cite{multisummable}*{\S8});
here, this is one of our main concerns.

The precise definition of the structure $\R_{\mathcal G}$ is lengthy; we shall not repeat it.
(Indeed, we shall need only the definition of the unary primitive functions, but we shall make this more precise later at an appropriate point.)
For present purposes it is enough to know that $\R_\mathcal G$ is o\nobreakdash-\hspace{0pt}minimal, has field of exponents $\Q$
(that is, defines no power functions with irrational exponents)
and expands the better-known structure $\R_\mathrm{an}$ (essentially, the expansion of $\rbar$ by all globally subanalytic sets; see, \textit{e.g.}, \cite{geocat}*{2.5.4}).
For more detailed information on $\R_\mathcal G$ and $(\R_\mathcal G,e^x)$, the best source is still~\cite{multisummable}.  

We begin with a motivating example.
Given $s>0$, let $W_s$ be the canonical Weierstrass product for the sequence $(-n^s)_{n>0}$:
\[
W_s(z)=\prod_{n>0}\left(1+\frac{z}{n^s}\right)\exp
\left(\sum_{j=1}^{\lfloor 1/s\rfloor}\frac{(-1)^jz^j}{jn^{sj}}\right),\ z\in\mathbb C
\]
where $\lfloor\phantom{x}\rfloor$ indicates taking the integer part.
It is classical that $W_s$ is holomorphic, has simple zeros at each $-n^s$ ($n>0$), and no other zeros;
its order (as an entire function) is $1/s$.
As $ze^{\gamma z}W_1=1/\Gamma$ and $\pi z W_2(z^2)=\sinh (\pi z)$, it is reasonable to regard $(s,z)\mapsto W_s(z)$ as a parameterized family of special functions;
indeed, they were investigated as such  in the classical literature, but typically only as individual functions (see, \textit{e.g.}, Barnes~\cite{MR1575933} and Ford~\cite{MR0115035}*{pp.~55--59}) as opposed to a potentially interacting family of functions.
As $W_s$ is real on real, we can also regard $W_s$ as a function $\R\to\R$.
The question arises as to what can be said about expansions of $\rbar$ by collections of restrictions of the $W_s$ to subintervals of $\R$.
As each set $\set{-n^s: n\in\N}$ defines $\Z$ over $\rbar$
(see, \textit{e.g.}, \cite{aph}),
we consider only the case that each subinterval is bounded below.
If $K\subseteq \C$ is compact and subanalytic, then  $W_s{\restriction} K$ is definable in $\R_{\mathrm{an}}$. 
Thus, the restriction of any $W_s$ to any bounded interval is definable in $\R_{\mathrm{an}}$ (hence also in $\R_\mathcal G$).
It is an easy exercise to see that $W_2{\restriction} (0,\infty)$ is interdefinable over $\rbar$ with the function $e^x$.
Hence, by Pfaffian closure~\cite{pfaffcl},
if $\mathfrak{R}$ is an o\nobreakdash-\hspace{0pt}minimal expansion of $\rbar$ then so is the expansion of $\mathfrak{R}$ by $W_2{\restriction} (0,\infty)$.
If $s\in 2\N+4$ (that is, $s$ is an even integer greater than $2$), then any restriction of $W_s$ to any unbounded subinterval of $\R$ defines $\Z$ over $\rbar$ (see Theorem~\ref{mainnegative} below).
As $(\rbar,\Z)$ defines all real projective sets (see, \textit{e.g.}, Kechris~\cite{kechris}*{(37.6)}), we regard expansions of  $(\rbar,\Z)$ as too ``wild'' to be studied as first-order definability theory.   
Hence, we modify our original question:
\emph{What can be said about expansions of $\rbar$ by collections of restrictions $W_s{\restriction} (0,\infty)$ where $s$ ranges over some $S\subseteq (0,\infty)\setminus 2\N+4$?}
Our main working conjecture is that the expansion of $\R_{\mathrm{an}}$ by all $W_s{\restriction} (0,\infty)$ with $s\in (0,\infty)\setminus 2\N+4$ is o\nobreakdash-\hspace{0pt}minimal, but this appears to be beyond our reach at present.
Nevertheless, we do have a number of partial results that illustrate both the potential depth of this conjecture and some important techniques;
we state some of these results now (proofs are mostly deferred to later in the paper).

As is typical when dealing with products, logarithmic derivatives play an important role.
Here, the functions $(zW_s'/W_s)'$ are even more significant.
It is well worth noting that all restrictions of these (meromorphic) functions to compact subintervals of $\R$ are definable in $\R_\mathrm{an}$.

\emph{From now on:} 
$W_s{\restriction} (0,\infty)$ will be denoted by just $W_s$ (any resulting ambiguity should be easily resolved by context).

Our first result is rather easy relative to classical analysis, but it is important.

\begin{proposition}\label{easydefines}
Let $c\geq 0$.
Then $\bigl(\rbar,(xW_s'/W_s)'{\restriction} (c,\infty)\bigr)$ defines the power function $x^s$.
If $1/s\in \N$, then $\bigl(\rbar,(xW_s'/W_s)'{\restriction} (c,\infty)\bigr)$ defines $e^x$.
\end{proposition}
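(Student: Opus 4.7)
The plan is to extract the claimed functions from the asymptotic behaviour of $F_s(x):=(xW_s'/W_s)'(x)$ as $x\to+\infty$, using that definability of $F_s$ on $(c,\infty)$ lets us form definable pointwise limits (which require no tameness of the ambient structure). Logarithmic differentiation of the canonical product, followed by a second differentiation, yields
\[
F_s(x)=\sum_{n>0}\frac{n^s}{(n^s+x)^2}+Q_s(x),
\]
where $Q_s$ is a polynomial of degree $\lfloor 1/s\rfloor-1$ coming from the convergence factors (to be understood as a single convergent series when $s\le 1$, in which case the two pieces are not individually summable).

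A routine Mellin-transform, or Euler--Maclaurin, analysis of this meromorphic expression produces the dichotomy that drives the proof. If $1/s\notin\N$, then $F_s(x)=c_s x^{1/s-1}+o(x^{1/s-1})$ with the nonzero constant $c_s:=\pi/(s^2\sin(\pi/s))$, and this $x^{1/s-1}$ contribution strictly dominates $Q_s$. If $1/s=k\in\N$, the would-be leading contribution of the sum resonates with $Q_s$ and, after regularisation, one obtains $F_s(x)=a_k x^{k-1}\log x+O(x^{k-1})$ with $a_k:=(-1)^k k^2\ne 0$. Both coefficients can be read off from the simple pole of $c_s$ at $s=1/k$: the divergence of $c_s x^{1/s-1}$ must be cancelled by a compensating divergence in $Q_s$, and the residue produces the logarithm.

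The model-theoretic extractions are then short. When $1/s\notin\N$, the definable map $(x,y)\mapsto F_s(xy)/F_s(y)$ converges pointwise to $x^{1/s-1}$ as $y\to+\infty$; since convergence to a value is first-order expressible, $x\mapsto x^{1/s-1}$ is definable in $(\rbar,F_s{\restriction}(c,\infty))$, whence so are $x^{1/s}$ (by multiplying by $x$) and its compositional inverse $x^s$. When $1/s=k\in\N$, the definable map $(x,y)\mapsto y^{-(k-1)}F_s(y)-(xy)^{-(k-1)}F_s(xy)$ converges pointwise to $-a_k\log x$; hence $\log{\restriction}(0,\infty)$ is definable, and therefore so are $\exp$ on $\R$ and $x^s=\exp(s\log x)$.

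The principal difficulty is the asymptotic dichotomy, in particular the exact cancellation in the resonant case $1/s\in\N$ that forces a logarithmic leading term. Once the asymptotic is in hand the definability steps are essentially automatic, since pointwise limits of definable families are definable; no o\nobreakdash-minimality or other tameness of the ambient structure is used.
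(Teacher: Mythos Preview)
Your approach is essentially the paper's: both work with (a scalar multiple of) $\mathfrak d_2\log W_s$, invoke the classical leading asymptotic (the paper cites Barnes; you sketch Mellin/Euler--Maclaurin), and then recover $x^{1/s}$ or $\log$ as a definable pointwise limit of quotients or differences. One caveat in the resonant case $1/s=k\in\N$: your stated error $O(x^{k-1})$ is not literally strong enough for the difference $y^{-(k-1)}F_s(y)-(xy)^{-(k-1)}F_s(xy)$ to converge, since the two resulting $O(1)$ remainders need not cancel; you actually need the next term, i.e.\ $F_s(x)=a_k x^{k-1}\log x+b_k x^{k-1}+o(x^{k-1})$, which the same analysis supplies and which the paper's version handles by first subtracting an explicit polynomial before dividing.
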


\begin{cor}
$(\rbar,W_1)$ and $(\rbar,\Gamma{\restriction} (0,\infty))$ are interdefinable. 
\end{cor}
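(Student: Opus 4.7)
The plan is to exploit the classical Weierstrass product formula for the Gamma function,
\[
\frac{1}{\Gamma(z)} = z\, e^{\gamma z}\, W_1(z), \qquad z > 0,
\]
where $\gamma$ denotes Euler's constant. Because $\gamma$ is a real constant (hence available in $\rbar$), this identity reduces the entire matter to showing that \emph{each} of $(\rbar,W_1)$ and $(\rbar,\Gamma{\restriction}(0,\infty))$ defines the exponential function $e^x$; once $e^x$ is in hand on either side, solving the identity for $\Gamma$ (respectively, for $W_1$) expresses the other primitive as an explicit $\rbar$-term in the available functions and the constants $\gamma$ and $1$.

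To produce $e^x$ I would invoke Proposition~\ref{easydefines} at $s = 1$ (so that $1/s = 1 \in \N$), which yields $e^x$ as soon as $(xW_1'/W_1)'{\restriction}(c,\infty)$ is definable for some $c \geq 0$. In $(\rbar,W_1)$ this is immediate, because derivatives, products, and quotients of functions definable over $\rbar$ are themselves definable over $\rbar$ on the appropriate open sets, and $W_1$ is strictly positive on $(0,\infty)$. In $(\rbar,\Gamma{\restriction}(0,\infty))$, I would first take the logarithmic derivative of the Weierstrass identity to obtain
\[
\frac{W_1'(x)}{W_1(x)} = -\gamma - \frac{1}{x} - \frac{\Gamma'(x)}{\Gamma(x)}, \qquad x > 0,
\]
so that $(xW_1'/W_1)'{\restriction}(0,\infty)$ is an $\rbar$-term in $\Gamma$, $\Gamma'$, $\Gamma''$, and rational functions of $x$; all of these are definable from $\Gamma{\restriction}(0,\infty)$, and Proposition~\ref{easydefines} then supplies $e^x$ on this side as well.

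Since Proposition~\ref{easydefines} packages all of the genuinely hard analytic content and is granted to us, I do not anticipate any real obstacle beyond the short bookkeeping sketched above. The main conceptual point worth emphasizing is the symmetry: the same proposition can be triggered from either structure by noting that $(xW_1'/W_1)'$ is visible in $(\rbar,W_1)$ directly and in $(\rbar,\Gamma{\restriction}(0,\infty))$ via the logarithmic-derivative relation, which makes the reduction to $e^x$, and thence to the Weierstrass identity, symmetric in the two structures.
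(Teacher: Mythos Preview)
Your argument is correct. Both you and the paper use the Weierstrass identity $z e^{\gamma z}W_1(z)=1/\Gamma(z)$ and reduce interdefinability to producing $e^x$ on each side; and for the direction $(\rbar,W_1)\rightsquigarrow e^x$ you both appeal (the paper implicitly) to Proposition~\ref{easydefines} with $s=1$.

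The genuine difference is in the direction $(\rbar,\Gamma{\restriction}(0,\infty))\rightsquigarrow e^x$. The paper does this in one stroke via Stirling's formula, reading off $e^x$ as the limit $\lim_{t\to\infty}\Gamma(x+et)\Gamma(t)/(\Gamma(et)\Gamma(x+t))$. You instead differentiate the logarithm of the Weierstrass identity to exhibit $(xW_1'/W_1)'$ as an $\rbar$-term in $\Gamma,\Gamma',\Gamma''$, and then invoke Proposition~\ref{easydefines} a second time. Your route is pleasantly symmetric---the same proposition handles both halves---and avoids importing Stirling; the paper's route is shorter and entirely self-contained on the $\Gamma$ side, not needing to loop back through the $W_1$ machinery. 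Either way the content is the same: once $e^x$ is available, the Weierstrass identity does the rest.
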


\begin{proof}
As $ze^{\gamma z}W_1(z)=1/\Gamma(z)$, the point is to show that $(\rbar, \Gamma{\restriction} (0,\infty))$ defines  $e^x$, which is
immediate from Stirling's Formula: for $x\in\R$, we have 
\[
\lim_{t\to\infty}\frac{\Gamma(x+et)\Gamma(t)}{\Gamma(et)\Gamma(x+t)}=e^x.\qedhere
\]
\end{proof}

Rather more difficult, and one of our main results, is this:

\begin{theorem}\label{main}
If $1<s<2$, then $(\R_{\mathcal G},x^s)$ defines $W_s'/W_s$, and $(\R_\mathcal G, e^x)$ defines $W_s$.
\end{theorem}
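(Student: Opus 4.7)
Because $\lfloor 1/s\rfloor=0$ for $1<s<2$, the exponential correction factors in the Weierstrass product vanish, so for $x>0$
\[
W_s(x)=\prod_{n>0}\left(1+\frac{x}{n^s}\right),\qquad \frac{W_s'(x)}{W_s(x)}=\sum_{n>0}\frac{1}{n^s+x}.
\]
My plan is to exhibit an ``explicit leading term plus tame remainder'' decomposition
\[
\frac{W_s'(x)}{W_s(x)}=\frac{\pi}{s\sin(\pi/s)}\,x^{1/s-1}+\Phi_s(x),
\]
in which the first term is definable from $x^s$ (invert to obtain $x^{1/s}$, then divide by $x$) while $\Phi_s$ belongs to $\R_{\mathcal G}$. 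Combining the two pieces gives the first assertion.

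To produce the decomposition I would apply the Mellin--Barnes method: using $\frac{1}{1+y}=\frac{1}{2\pi i}\int_{(c)}\frac{\pi}{\sin(\pi u)}y^{-u}\,du$ termwise and summing yields
\[
\sum_{n>0}\frac{1}{n^s+x}=\frac{1}{2\pi i}\int_{(c)}\frac{\pi\,\zeta(su)}{\sin(\pi u)}\,x^{u-1}\,du,\qquad 1/s<c<1.
\]
Shifting the contour leftward, the pole of $\zeta(su)$ at $u=1/s$ contributes the wild leading term, while the simple poles of $\pi/\sin(\pi u)$ at $u=-k$ ($k\in\N$) generate the formal asymptotic
\[
\Phi_s(x)\sim\sum_{k\ge 0}(-1)^k\zeta(-sk)\,x^{-k-1}.
\]
The Riemann functional equation together with Stirling give $|\zeta(-sk)|\asymp(k!)^s$, so this series is Gevrey of order $s$.

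The crux of the argument, and the main technical obstacle, is to show that $\Phi_s$ is genuinely $s$-summable at $+\infty$ in the precise sense required for it to lie in $\R_{\mathcal G}$. Using
\[
\zeta(-sk)=\frac{2\Gamma(sk+1)\zeta(sk+1)}{(2\pi)^{sk+1}}\cos\bigl(\pi(sk+1)/2\bigr),
\]
the formal Borel transform of order $1/s$ has $O(1)$ coefficients and, after the change of variable $\xi\mapsto\xi^s$, should extend holomorphically across a sector avoiding the singularities coming from $\zeta(s\,\cdot\,+1)$; uniform control of the Mellin--Barnes tail along shifted contours then supplies the Laplace-integral representation. Unlike the $\log\Gamma$ analysis in \cite{multisummable}, which only requires $1$-summability, here the summability order $s\in(1,2)$ is generally irrational and the Borel transform involves $\zeta$ along vertical lines; this is where the substantive work lies.

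For the second assertion, $(\R_{\mathcal G},e^x)$ defines $x^s=e^{s\log x}$, so the first part already gives $W_s'/W_s$ definable in $(\R_{\mathcal G},e^x)$. Rerunning the Mellin--Barnes analysis on $\log W_s(x)=\sum_{n>0}\log(1+x/n^s)$ via $\int_0^\infty\log(1+y)y^{u-1}\,dy=\pi/(u\sin(\pi u))$ (equivalently, integrating the expansion of $W_s'/W_s$ term by term) produces
\[
\log W_s(x)=C_s+\frac{\pi}{\sin(\pi/s)}\,x^{1/s}-\frac{1}{2}\log x+\Psi_s(x)
\]
for some constant $C_s\in\R$ and a remainder $\Psi_s$ that is $s$-summable at $+\infty$ by the same mechanism, hence in $\R_{\mathcal G}$. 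Since $x^{1/s}$ and $\log x$ are definable in $(\R_{\mathcal G},e^x)$, so is $\log W_s$, and exponentiating yields $W_s$.
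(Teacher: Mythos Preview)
Your overall plan---split $W_s'/W_s$ into the leading term $(\pi/s)\csc(\pi/s)\,x^{1/s-1}$ plus a remainder, show the remainder lies in $\R_{\mathcal G}$, then integrate and exponentiate for $W_s$---matches the paper's strategy. The difference is in how the decomposition is obtained and, crucially, how the remainder is placed in $\R_{\mathcal G}$. The paper does not use Mellin--Barnes. It applies real Poisson summation to $\sum_{n>0}1/(x+n^s)$ (Proposition~\ref{maincalc}) and obtains, for $1<s<2$,
\[
\frac{W_s'}{W_s}=\frac{\pi}{s}\csc(\pi/s)\,x^{1/s-1}-\frac{1}{2x}+\phi_s(x),\qquad
\phi_s(x)=2\sin\tfrac{s\pi}{2}\sum_{n>0}\int_0^\infty\frac{t^s e^{-2\pi nt}\,dt}{x^2+2\cos\tfrac{s\pi}{2}\,t^sx+t^{2s}}.
\]
It then \emph{builds} an explicit Laplace-of-order-$1/s$ representand for an antiderivative $\Phi_s$ of $\phi_s$ (Proposition~\ref{intphidef}): with $Q_s(w)=1/(w^2+2\cos(\pi s/2)w+1)$, its primitive $g_s$, and $h_s(z)=\sum_{n>0}(2\pi n)^{-1}g_s(z/(2\pi n)^s)$, one checks by hand that $h_s$ is holomorphic and of linear growth on an explicit sector containing the positive real axis, whence $\Phi_s=-2\sin(s\pi/2)\,x^{1/s}(\mathcal L_{1/s}h_s)(x^{1/s})$, and Balser's Proposition~9 and Theorem~22 convert this directly into the Gevrey derivative bounds that define the unary primitives of $\R_{\mathcal G}$.

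Your alternative---read off the asymptotic series from the pole structure, take its formal Borel transform of order $1/s$, and argue analytic continuation to a sector---is the natural first idea, but the step you yourself flag as ``the substantive work'' (``should extend holomorphically'', ``uniform control \dots\ then supplies the Laplace-integral representation'') is exactly the gap. The paper in fact discusses the Barnes contour-integral formulation, which is essentially your Mellin--Barnes integral, in the paragraphs preceding the proofs and says that ``this formulation is of rather limited utility for current purposes, and we shall have to employ other approaches.'' So the authors considered your route and deliberately bypassed it: the point of the explicit $h_s$ construction is to avoid analyzing the analytic continuation of a Borel sum and instead verify membership in $\R_{\mathcal G}$ by exhibiting the Laplace representand and its domain of holomorphy directly. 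Your outline is not wrong in principle, but completing it would require work at least comparable to what the paper actually does, and the paper's own remarks suggest it is the harder path.
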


By combining these two results with known technology, we obtain:

\begin{theorem}\label{maincor}
$\bigl(\R_\mathcal G,(W_s)_{1\leq s\leq 2}\bigr)$ defines $e^x$,
admits analytic cell decomposition (and so is o\nobreakdash-\hspace{0pt}minimal), and its theory is levelled.\footnote{See Kuhlmann and Kuhlmann~\cite{kk} or Marker and Miller~\cite{levelled}
for the definition of ``levelled theory'' and explanations of its significance.}
If $S\subseteq (1,2)$, then $\bigl(\R_\mathcal G, (W_s'/W_s)_{s\in S}\bigr)$
admits analytic cell decomposition and is polynomially bounded with field of exponents $\Q(S)$.
\end{theorem}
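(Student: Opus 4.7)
The plan is to reduce both parts to structures whose properties are already established, using the interdefinability consequences of Proposition~\ref{easydefines} and Theorem~\ref{main}.

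For part~(a), recall from the discussion preceding Proposition~\ref{easydefines} that $W_2$ is interdefinable over $\rbar$ with $e^x$, so $(\R_\mathcal G,(W_s)_{1\leq s\leq 2})$ defines $e^x$. Conversely, Theorem~\ref{main} shows that each $W_s$ with $1<s<2$ is definable in $(\R_\mathcal G,e^x)$; and $W_1$ is interdefinable with $\Gamma{\restriction}(0,\infty)$ by the Corollary to Proposition~\ref{easydefines}, which is in turn definable in $(\R_\mathcal G,e^x)$ by~\cite{multisummable}. Hence $(\R_\mathcal G,(W_s)_{1\leq s\leq 2})$ and $(\R_\mathcal G,e^x)$ are interdefinable, and o\nobreakdash-\hspace{0pt}minimality, analytic cell decomposition, and levelledness of the latter are all established in~\cite{multisummable} (together with~\cite{kk} or~\cite{levelled} for the notion of levelled theory).

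For part~(b), fix $s\in S$. By Theorem~\ref{main} the function $W_s'/W_s$ is definable in $(\R_\mathcal G,x^s)$. Conversely, from $W_s'/W_s$ one definably forms $xW_s'/W_s$ and then its derivative, so Proposition~\ref{easydefines} yields $x^s$ in $(\rbar,W_s'/W_s)$. Thus $(\R_\mathcal G,(W_s'/W_s)_{s\in S})$ and $(\R_\mathcal G,(x^s)_{s\in S})$ are interdefinable. The remaining claims then follow from the standard theory of polynomially bounded o\nobreakdash-\hspace{0pt}minimal expansions of $\rbar$ by real power functions: adjoining $\{x^s:s\in S\}$ to a polynomially bounded structure with field of exponents $\Q$ preserves polynomial boundedness, expands the field of exponents to $\Q(S)$, and (because the $x^s$ are real analytic on $(0,\infty)$, and $\R_\mathcal G$ itself admits analytic cell decomposition) preserves analytic cell decomposition.

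The main obstacles have already been overcome in Proposition~\ref{easydefines} and Theorem~\ref{main}; what remains is genuinely a packaging argument. The one place that requires some care is part~(b), where one must cite the right results on expansions of polynomially bounded structures by arbitrarily many power functions (in order to control the field of exponents and preserve analytic cell decomposition simultaneously); this is routine given the known behaviour of $\R_\mathcal G$ but is not completely trivial to reference precisely when $S$ is infinite.
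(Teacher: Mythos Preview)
Your proposal is correct and follows essentially the same approach as the paper: reduce both parts to interdefinability with $(\R_\mathcal G,e^x)$ and $(\R_\mathcal G,(x^s)_{s\in S})$ respectively, then quote known properties of those structures. The paper fills in exactly the reference you flag as needing care in part~(b): it invokes~\cite{linear}*{4.1} (together with the fact that $\R_\mathcal G$ defines all restricted powers $x^r{\restriction}[1,2]$) for both the field of exponents being $\Q(S)$ and analytic cell decomposition, and cites~\cite{kk} specifically for levelledness.
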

 
These results are sharpened by:

\begin{theorem}\label{mainnegative}
Let $c\geq 0$.
\begin{enumerate}
\item
If $s\in 2\N+4$, then
$\bigl(\rbar,(xW_s'/W_s)'{\restriction} (c,\infty)\bigr)$ defines $\Z$.
\item
If $s>2$, then $\bigl(\R_\mathcal G,(xW_s'/W_s)'{\restriction} (c,\infty)\bigr)$ defines $\Z$.
\item
If $s\neq 2$, then $(\R_\mathrm{an},e^x)$ does not define
$(xW_s'/W_s)'{\restriction} (c,\infty)$.
\end{enumerate}
\end{theorem}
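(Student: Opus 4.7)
\emph{Proof plan.} The engine for parts (1) and (2) is the standard fact that an expansion of $\rbar$ defining an infinite, closed, discrete, unbounded subset of $\R$ defines $\Z$: I produce such a set as the zero locus of an auxiliary function built from $f:=(xW_s'/W_s)'{\restriction}(c,\infty)$. For~(1), I would combine Hadamard factorization with the reflection $\Gamma(1-z)\Gamma(1+z)=\pi z/\sin\pi z$ (used to pair the $2k$-th roots of $-1$) to obtain the closed form
\[
W_{2k}(x)=\prod_{j=0}^{k-1}\frac{\sin(\pi y e^{i\theta_j})}{\pi y e^{i\theta_j}},\qquad y=x^{1/(2k)},\ \theta_j=\tfrac{(2j+1)\pi}{2k},
\]
from which $xW_{2k}'/W_{2k}+\tfrac12=\tfrac1{2k}\sum_{j}\pi y e^{i\theta_j}\cot(\pi y e^{i\theta_j})$. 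Inserting $\cot z=-i-2i\sum_{n\ge 1}e^{2inz}$ (valid for $\operatorname{Im}z>0$) and collecting real parts displays this as a real algebraic leading term $\pi y/\bigl(2k\sin(\pi/(2k))\bigr)-\tfrac12$ plus exponentially damped oscillations of slowest decay rate $e^{-2\pi y\sin(\pi/(2k))}$, whose leading oscillatory factor is $\sin\bigl(2\pi y\cos(\pi/(2k))+\varphi\bigr)$. The trivial zeros of $\zeta$ at $-s,-2s,\ldots$ annihilate \emph{every} polynomial correction in the formal asymptotic of $f$ for even integer~$s$, so the difference between $f(x)$ and its single leading term $\bigl(\pi/(s^2\sin(\pi/s))\bigr)x^{1/s-1}$ is exactly the differentiated exponential correction. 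By Proposition~\ref{easydefines}, $x^{1/(2k)}$ is $\rbar$-definable from~$f$, and hence so is this difference; its zero set on $(c',\infty)$ is infinite, discrete and unbounded (the zeros accumulate along the arithmetic progression on which the leading $\sin$ vanishes), and the dichotomy delivers~$\Z$.

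For~(2), with $s>2$, I would start from the Mellin--Barnes representation $f(x)=\frac{1}{2\pi i}\int_{(\sigma)}x^{-w}\Gamma(w)\Gamma(2-w)\zeta(s(1-w))\,dw$ ($0<\sigma<1-1/s$). Shifting the contour rightwards past the poles at $w=1-1/s,\,2,\,3,\ldots$ produces the formal series $f(x)\sim C_s x^{1/s-1}+\sum_{k\ge 2}c_k x^{-k}$, which is Gevrey of level~$1/s$. The multisummable class underlying $\R_\mathcal G$ then supplies a function $A\in\R_\mathcal G$ summing this series; the remainder $R:=f-A$ collects the Stokes contributions from the zeros $-n^s$ outside the principal sector, and I expect it to take the form $R(x)=Cx^\alpha e^{-dx^{1/s}}\sin(cx^{1/s})+\cdots$. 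Its zero set on $(c',\infty)$ is then infinite, discrete and unbounded, and the dichotomy again yields~$\Z$. The main obstacle here is the multisummability step: one must verify that the formal series is multisummable in the precise class underlying $\R_\mathcal G$ for all real $s>2$ (not merely the even integers) and identify the Stokes correction explicitly. This is the substantive analytic heart of the argument and will rely on careful Borel--Laplace analysis.

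Finally, for~(3): if $s>2$ and $f$ were definable in $(\R_{\mathrm{an}},e^x)\subseteq(\R_\mathcal G,e^x)$, then (2) would force the o\nobreakdash-\hspace{0pt}minimal structure $(\R_\mathcal G,e^x)$ to define~$\Z$, which is absurd. For $0<s<2$ with $s\ne 2$, the same Mellin--Barnes analysis shows that the asymptotic expansion of $f$ at $+\infty$ is Gevrey of level $1/s\ne 1$; since every unary germ at $+\infty$ definable in $(\R_{\mathrm{an}},e^x)$ is Gevrey of level~$1$ (via the convergent LE\nobreakdash-\hspace{0pt}series description of such germs due to Wilkie and van den Dries--Macintyre--Marker), $f$ cannot be definable in $(\R_{\mathrm{an}},e^x)$ in this range either.
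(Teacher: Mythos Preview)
Your overall architecture is right, but two of the load-bearing facts are misstated.

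\textbf{The ``engine'' for (1) and (2) is false.} It is not true that every expansion of $\rbar$ defining an infinite closed discrete unbounded subset of $\R$ defines~$\Z$; for instance $(\rbar,2^{\Z})$ does not (indeed, Theorem~\ref{eulerprods} itself produces tame structures of exactly this kind). The paper's tool is the Assouad-dimension criterion of Hieronymi--Miller~\cite{amin}: one must check that the relevant zero set (or a suitable continuous image of it) fails to have Assouad dimension~$0$. Your zero sets do satisfy this---the zeros $x_n$ have $x_n^{1/s}$ in approximate arithmetic progression, hence $x_n\sim(cn)^s$---so the repair is local, but you must invoke the correct criterion rather than the nonexistent dichotomy.

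\textbf{The Gevrey claim in (3) is wrong.} What \cite{dmm2}*{5.5} gives is not that $(\R_{\mathrm{an}},e^x)$-definable germs are Gevrey of level~$1$, but that any such germ with a formal power-series asymptotic at $+\infty$ has a \emph{convergent} expansion. With ``Gevrey-$1$'' your argument actually fails for $0<s<1$: the coefficients $k|\zeta(-sk)|$ grow like $(k!)^s$, which \emph{is} Gevrey-$1$ when $s<1$. The paper instead shows the series $\sum k|\zeta(-sk)|T^k$ diverges for every $s\notin 2\N$, via the functional equation: convergence would force $|\sin(k\pi s/2)|^{1/k}\to 0$ and hence $s/2\in\N$.

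For (2) your sketch is in the right spirit but defers exactly the step that carries the weight. The paper does not sum the formal series abstractly; it obtains an explicit decomposition $W_s'/W_s=(\pi/s)\csc(\pi/s)x^{1/s-1}-\tfrac{1}{2x}+\phi_s+\omega_s$ by Poisson summation and residues (Proposition~\ref{maincalc}), and then proves by a concrete Borel--Laplace estimate that an antiderivative of $\phi_s$ restricts to a primitive unary function of $\R_{\mathcal G}$ (Proposition~\ref{intphidef}). Your anticipated ``Stokes remainder'' $R$ is exactly their $\omega_s$, and its dominant term is indeed $\sin\bigl(2\pi x^{1/s}\cos(\pi/s)+\pi/s\bigr)$, matching your prediction; but the identification of the $\R_{\mathcal G}$-definable piece is done by hand, not by appeal to a general multisummability theorem.
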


While~(2) might seem to cast doubt on our conjecture,
its proof relies critically on having available certain unary functions that are definable in $\R_\mathcal G$, and these functions are not definable in $(\R_\mathrm{an},e^x)$.
Moreover, it follows routinely from work of Bank~\cite{bank} and Bank and Kaufmann~\cite{bankkauf} that the set of germs at $+\infty$ of the $W_s$ with $s\in 2\N+1$ generates a Hardy field (see any of~\cites{genpower,kk,aph,hardyomin} for a definition).
Hence, if $(\rbar,(W_s)_{s\in 2\N+1})$ is not o\nobreakdash-\hspace{0pt}minimal, then it will be because of something more than just the differential algebra of the ring $\R[x][W_s: s\in 2\N+1]$.

\begin{cor}
Let $S\subseteq (1,2)$ and $\alpha>0$.
If $\alpha>1$, then $\bigl(\R_\mathcal G, (W_s'/W_s)_{s\in S}\bigr)$ defines $W_\alpha'/W_\alpha$ if and only if $\alpha\in \Q(S)\cap (1,2)$.
If $\alpha\in (0,1)\setminus \Q(S)$ or $1/\alpha\in\N$, then $W_\alpha'/W_\alpha$ is not definable in $\bigl(\R_\mathcal G, (W_s'/W_s)_{s\in S}\bigr)$.
\end{cor}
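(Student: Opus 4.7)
Write $\mathfrak{R}_S:=(\R_\mathcal G,(W_s'/W_s)_{s\in S})$. The plan is to combine Theorem~\ref{maincor}---which tells us that $\mathfrak{R}_S$ is o-minimal, polynomially bounded, and has field of exponents $\Q(S)$---with Theorem~\ref{main}, Proposition~\ref{easydefines}, and Theorem~\ref{mainnegative}(2); no new analytic input is needed beyond a single identity for $W_2$.

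I would first dispose of the easy direction of the first statement. If $\alpha\in\Q(S)\cap(1,2)$ then $x^\alpha$ is definable in $\mathfrak{R}_S$ (since its field of exponents is $\Q(S)$), and Theorem~\ref{main} immediately yields $W_\alpha'/W_\alpha$ as definable in $(\R_\mathcal G,x^\alpha)\subseteq\mathfrak{R}_S$.

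All remaining implications are non-definability assertions, and I would attack them by a uniform template: assume $W_\alpha'/W_\alpha$ is definable in $\mathfrak{R}_S$; note that then so is $(xW_\alpha'/W_\alpha)'\restriction(c,\infty)$ for any $c\geq 0$; and then invoke either Proposition~\ref{easydefines} or Theorem~\ref{mainnegative}(2) to produce a definable object forbidden by Theorem~\ref{maincor}. Specifically, if $\alpha>2$, Theorem~\ref{mainnegative}(2) yields $\Z$, contradicting o-minimality. For the forward direction of the first statement with $\alpha\in(1,2)$, Proposition~\ref{easydefines} yields $x^\alpha$, forcing $\alpha\in\Q(S)$. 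If $\alpha\in(0,1)\setminus\Q(S)$, the same proposition again yields $x^\alpha$, contradicting the field-of-exponents identification. If $1/\alpha\in\N$, Proposition~\ref{easydefines} instead produces $e^x$, contradicting polynomial boundedness.

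The single case not covered by this template is $\alpha=2$ (needed to complete the forward direction of the first statement), and here lies the only mildly nontrivial step. For it I would exploit the classical identity $\pi zW_2(z^2)=\sinh(\pi z)$, which gives
\[
\frac{W_2'(y)}{W_2(y)}=\frac{\pi\coth(\pi\sqrt y)}{2\sqrt y}-\frac{1}{2y}\qquad(y>0),
\]
so that $W_2'/W_2$ together with the globally subanalytic operations already available in $\R_\mathcal G\supseteq\R_\mathrm{an}$ defines $\coth$, and hence $e^x$; polynomial boundedness of $\mathfrak{R}_S$ is then violated. Beyond this case the proof is a bookkeeping consequence of the field-of-exponents and polynomial-boundedness information supplied by Theorem~\ref{maincor}, and I expect no further obstacles.
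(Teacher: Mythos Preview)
Your proof is correct and follows essentially the same approach as the paper's: both invoke Theorem~\ref{maincor} for the polynomial-boundedness/field-of-exponents information, Proposition~\ref{easydefines} for the power function and $e^x$ cases, Theorem~\ref{mainnegative}(2) for $\alpha>2$, and the interdefinability of $W_2$ with $e^x$ for the case $\alpha=2$. The paper's proof is much terser and simply says ``Recall that $W_2$ defines $e^x$ over $\rbar$''; your explicit derivation from $\pi zW_2(z^2)=\sinh(\pi z)$ is exactly the computation underlying that remark (and is arguably more careful, since strictly speaking one needs $W_2'/W_2$, not $W_2$, to define $e^x$---your formula makes this transparent).
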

 
\begin{proof}
By Theorem~\ref{maincor}, $\bigl(\R_\mathcal G, (W_s'/W_s)_{s\in S}\bigr)$ is polynomially bounded and has field of exponents $\Q(S)$.
Recall that $W_2$ defines $e^x$ over $\rbar$, and apply Proposition~\ref{easydefines} and Theorem~\ref{mainnegative}.
\end{proof}

As corollaries of proofs, versions of the above results should also hold for certain variants of the $W_s$, but we do not yet properly understand the case $s\in (0,1)$.
Discussion of these issues is best postponed until after the proofs of our results above.

Another family of classical\footnote{See, \textit{e.g.}, the section on Euler partition products in Remmert~\cite{remmert}.} canonical products 
is obtained by interchanging $n$ and $s$ (if $s>1$) in the definition of $W_s$.
For $s>1$, put $F_s=\prod_{n>0}(1+s^{-n}z)$.

\begin{theorem}\label{eulerprods}
Let $s>1$ and $c\in\R$.
\begin{enumerate}
\item
$\bigl(\rbar, F_s'/F_s{\restriction} (c,\infty)\bigr)$
defines $\Z$.
\item
$\bigl(\rbar, (xF_s'/F_s)'{\restriction} (c,\infty)\bigr)$
defines $s^\Z$ (the set of all integer powers of $s$).
\item
If $0<r\notin \Q$, then 
$\bigl(\rbar, (xF_s'/F_s)'{\restriction} (c,\infty), (xF_{s^r}'/F_{s^r})'{\restriction} (c,\infty) \bigr)$
defines $\Z$.
\item
$(\R_\mathrm{an},s^\Z)$ defines
$(xF_{s^q}'/F_{s^q})'{\restriction} (c,\infty)$ for each $0<q\in\Q$.
\end{enumerate}
\end{theorem}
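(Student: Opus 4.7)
The common denominator is the functional equation $F_s(sz) = (1+z)F_s(z)$; writing $M_s := F_s'/F_s$ and $L_s(x) := x M_s(x)$, this translates to
\[
s M_s(sx) - M_s(x) = \tfrac{1}{1+x},\qquad L_s(sx) - L_s(x) = \tfrac{x}{1+x},\qquad s L_s'(sx) - L_s'(x) = \tfrac{1}{(1+x)^2}.
\]
For~(2), the $\rbar$-definable function $xL_s'(x) = \sum_{n \geq 1} s^n x/(s^n+x)^2$ rewrites as $\sum_{n \geq 1} h(n - \log_s x)$ with $h(u) := s^u/(1+s^u)^2$ an even, rapidly decaying bump; hence $xL_s'(x)$ differs by $O(1/x)$ from the nonconstant real-analytic $1$-periodic function $H(\log_s x) := \sum_{n \in \Z} h(n - \log_s x)$. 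The local maxima of $xL_s'(x)$ on $(c_0,\infty)$ for $c_0$ large therefore form a definable discrete sequence $(d_k)$ with $d_{k+1}/d_k \to s$, and $s^\Z$ is definable as the positive accumulation points of the definable ratio set $\{d_n/d_m\}$.

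For~(1), since $L_s'$ is $\rbar$-definable from $M_s$, part~(2) already yields $s^\Z$. The correction $\Sigma(x) := \sum_{k \geq 0} 1/(1+s^k x)$ satisfies the key identity $\Sigma(x) = 1/(1+x) + M_s(1/x)/x$ and is therefore $\rbar$-definable from $M_s$ wherever both $x$ and $1/x$ lie in the domain $(c,\infty)$; the general case follows by first extending $M_s$ to smaller positive arguments using the functional equation together with $s^\Z$ (to parameterize the finitely many iterations). The crucial observation is then that $R := L_s - \Sigma$ satisfies $R(sx) - R(x) = 1$ exactly, so $R(s^n) = R(1) + n$ for every $n \in \Z$; hence $\Z = \{R(w) - R(1) : w \in s^\Z\}$ is definable.

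For~(3), apply~(2) to both $s$ and $s^r$ to obtain $s^\Z$ and $s^{r\Z}$ as definable sets. Since $r \notin \Q$, $\{1,r\}$ is $\Q$-linearly independent, so these are multiplicatively independent discrete subgroups of $(0,\infty)$; the standard ``two independent multiplicative subgroups'' argument (as in Hieronymi's work on expansions of o\nobreakdash-\hspace{0pt}minimal structures by such groups) then yields $\Z$.

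For~(4), write $t := s^q$; since $q \in \Q$, $t^\Z$ is $\rbar$-definable from $s^\Z$ via closure under rational powers. The function $L_t'$ is real-analytic, with $L_t' \restriction [1,t]$ in $\R_\mathrm{an}$, and the iterated functional equation gives, for $x = t^n y$ with $y \in [1,t]$,
\[
L_t'(x) = L_t'(y)/t^n + \sum_{k=1}^n \tfrac{t^k}{(t^k+x)^2}.
\]
The principal obstacle---shared with~(1)---is expressing the finite-but-variable-length sum as a $(\R_\mathrm{an}, s^\Z)$-definable function of $x$ and $t^n$. My plan is to recognize it as $(xP_n'/P_n)'(x)$ with $P_n(x) := \prod_{k=1}^n(1+x/t^k) = F_t(x)/F_t(x/t^n)$, and to make $P_n$ definable by exploiting that $F_t$ has a uniform $\R_\mathrm{an}$-analytic presentation on each $t^\Z$-interval $[t^n, t^{n+1}]$---a structural consequence of the functional equation combined with the analyticity of $F_t \restriction [0,t]$. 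This identifies $L_t' \restriction (c,\infty)$ as a piecewise $\R_\mathrm{an}$-analytic function with uniform transition across the $t^\Z$-partition, hence definable in $(\R_\mathrm{an}, s^\Z)$.
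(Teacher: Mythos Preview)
Your treatments of~(2) and~(3) are sound and parallel the paper's: the paper also extracts $s^\Z$ from the critical-point structure of $\mathfrak d_2\log F_s=xL_s'(x)$ (via the zeros of $\mathfrak d_3\log F_s$), reaching the periodic-plus-decaying decomposition through Littlewood's explicit expansion of $\log F_s$ rather than your Poisson-style rewriting $\sum_n h(n-\log_s x)$; and~(3) is handled identically, by Hieronymi's two-subgroups theorem.

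The genuine gap is in~(1) and~(4), and it is precisely the obstacle you flag in~(4): the variable-length sum. In~(1), ``extending $M_s$ to smaller positive arguments using the functional equation together with $s^\Z$'' is not a first-order operation as stated. Iterating $M_s(y)=sM_s(sy)-1/(1+y)$ down from $(c,\infty)$ yields $M_s(y)=s^nM_s(s^ny)-\sum_{k=0}^{n-1}s^k/(1+s^ky)$ with $n=n(y)$ unbounded; having $s^\Z$ lets you name $s^{n(y)}$, but gives no mechanism to \emph{sum} the $n(y)$ terms. Hence you cannot define $\Sigma$ (or $R$) on any unbounded interval when $c\geq 1$, and your extraction of $\Z$ stalls. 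The same obstruction blocks~(4): your ``uniform $\R_{\mathrm{an}}$-analytic presentation of $F_t$ on each $t^\Z$-interval'' amounts to the claim that $\prod_{k=1}^n(1+x/t^k)$ is uniformly definable in $(n,x)$, which is exactly what is at issue.

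The paper sidesteps both problems via Littlewood's closed form
\[
\log F_s(x)=\tfrac{(\log x)^2}{2\log s}-\tfrac{\log x}{2}+\sum_{m>0}\tfrac{a_m}{m}x^{-m}-\sum_{m>0}\tfrac{b_m}{2m}\cos(2\pi m\log_s x),
\]
which already separates $\mathfrak d_k\log F_s$ into (polynomial in $\log x$) $+$ (convergent power series at $\infty$) $+$ ($\log_s$-periodic analytic). For~(1) this gives directly that $\lim_{t\to\infty}[L_s(yt)-L_s(t)]$ exists iff $y\in s^\Z$, with value $\log_s y\in\Z$---so $\Z$ drops out without ever defining $\Sigma$. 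For~(4), the periodic piece restricted to $[1,s]$ is $\R_{\mathrm{an}}$-definable and extends via $\lambda(x)=\max\bigl((0,x]\cap s^\Z\bigr)$, while the power-series piece is $\R_{\mathrm{an}}$-definable at $\infty$. Your route is salvageable: for~(1), replace the exact evaluation of $R$ by the limit $\lim_{v\to\infty}[L_s(gv)-L_s(v)]=\log_s g$ for $g\in s^\Z$ (which you already have from~(2)); for~(4), note that your $\Sigma$ is itself $-\sum_m a_m x^{-m}$, a convergent series in $1/x$, hence $\R_{\mathrm{an}}$-definable on $(1,\infty)$, so that $xL_t'(x)=H(\log_t x)+x\Sigma'(x)$ with both pieces definable in $(\R_{\mathrm{an}},t^\Z)$---but this is effectively rediscovering Littlewood's decomposition.
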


If $s>1$, then $(\R_\mathcal G, s^\Z)$ is clearly not o\nobreakdash-\hspace{0pt}minimal, but it is known to have a number of good qualities;
see~\cite{linear}*{Appendix}, Miller and Thamrongthanyalak~\cite{cpdmin}, and Tychonievich~\cite{tychthesis}*{Theorem~4.1.1}.
By~(4), 
the structure
$\bigl(\R_\mathcal G, ((xF_{s^q}'/F_{s^q})')_{0<q\in \Q}\bigr)$
shares these good qualities.
Thus, in contrast to Theorem~\ref{mainnegative}, the wild behavior of $xF_s'/F_s$ over $\R_\mathcal G$ can be tamed by taking its derivative.
Item~(3) is immediate from~(2) and that if $a,b>0$ and $\{\log a,\log b\}$ is $\Q$\nobreakdash-\hspace{0pt}linearly independent, then $(\rbar,a^\Z,b^\Z)$ defines $\Z$  by Hieronymi~\cite{hier}*{1.3}.

By Theorem~\ref{main}, $\R_\mathcal G$ defines $W_r'/W_r$ if $r\in \Q\cap (1,2)$,
so at least some of the $(xF_s'/F_s)'$ and the $W_r'/W_r$ are mutually well behaved.
However, this is rather exceptional: 

\begin{cor}
Let $\alpha>1$.
If $1/\beta\in\N$ or $\beta\in (0,\infty)\setminus \Q$, then $\Z$ is definable in\newline
$\bigl(\rbar,(xF_\alpha'/F_\alpha)',(xW_\beta'/W_\beta)'\bigr)$.
\end{cor}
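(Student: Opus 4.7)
The plan is to combine Proposition~\ref{easydefines} with Theorem~\ref{eulerprods}(2) and split on which hypothesis on $\beta$ holds. In either case, Theorem~\ref{eulerprods}(2) guarantees that $\alpha^\Z$ is definable in the reduct $(\rbar,(xF_\alpha'/F_\alpha)')$, so the task reduces to using $(xW_\beta'/W_\beta)'$ to turn this set of integer powers into $\Z$ itself.

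Case $1/\beta\in\N$: The second clause of Proposition~\ref{easydefines} gives that $(xW_\beta'/W_\beta)'$ defines $e^x$, hence also $\log$. Since $\alpha>1$, $\log\alpha$ is a nonzero definable constant, so I would recover $\Z$ simply as $\set{\log(t)/\log(\alpha) : t\in\alpha^\Z}$.

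Case $\beta\in(0,\infty)\setminus\Q$: Here Proposition~\ref{easydefines} provides only the power function $x\mapsto x^\beta$. Applying it pointwise to $\alpha^\Z$ produces $\set{\alpha^{n\beta} : n\in\Z}=(\alpha^\beta)^\Z$ as a further definable set, so the structure defines both $\alpha^\Z$ and $(\alpha^\beta)^\Z$. Because $\beta\notin\Q$ and $\log\alpha>0$, the pair $\set{\log\alpha,\ \log(\alpha^\beta)}=\set{\log\alpha,\ \beta\log\alpha}$ is $\Q$\nobreakdash-\hspace{0pt}linearly independent, so I can conclude that $\Z$ is definable by invoking Hieronymi~\cite{hier}*{1.3}, exactly as the authors do in the paragraph following Theorem~\ref{eulerprods}.

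I do not foresee a serious obstacle: once the earlier results in the section are accepted, each case is short bookkeeping. The only step that requires even a moment's attention is the $\Q$\nobreakdash-\hspace{0pt}linear independence check in Case~2, and this is immediate from $\beta\notin\Q$ together with $\alpha>1$.
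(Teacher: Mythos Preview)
Your proof is correct and follows essentially the same route as the paper's: invoke Theorem~\ref{eulerprods}(2) for $\alpha^\Z$, then in the $1/\beta\in\N$ case use $e^x$ (the paper just says $(\rbar,\alpha^\Z,e^x)$ ``evidently defines $\Z$'', which is exactly your $\log(t)/\log\alpha$ observation), and in the irrational case pass to $(\alpha^\beta)^\Z$ and apply \cite{hier}*{1.3}.
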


\begin{proof}
By Theorem~\ref{eulerprods}, $\bigl(\rbar,(xF_\alpha'/F_\alpha)'\bigr)$ defines $\alpha^\Z$. 
If $1/\beta\in \N$, then
$(\rbar, (xW_\beta'/W_\beta)')$ defines $e^x$ by Proposition~\ref{easydefines}, and $(\rbar,\alpha^\Z,e^x$) evidently defines $\Z$.
If $\beta\in (0,\infty)\setminus \Q$, then $(\rbar, (xW_\beta'/W_\beta)')$ defines $x^\beta$ by Proposition~\ref{easydefines}.
Observe that $(\rbar,\alpha^\Z,x^\beta)$ defines $\alpha^{\beta\Z},$ and again apply 
\cite{hier}*{1.3}.
\end{proof}

We hope we have convinced the reader that there is rich and subtle behavior to be found in the study of expansions of $\rbar$ by canonical products and some of their associated functions.
In this paper, we only scratch the surface. 
Indeed, we should point out that there is a wider context for these investigations, namely, the  ``tameness program'' for expansions of $\rbar$. 
See~\cite{tameness} for the seminal manifesto, but the basic question is: 
\emph{What can be said about expansions of $\rbar$  that do not define $\Z$?} 
In this generality, the most precise answers to date are due to Hieronymi and Miller~\cite{amin}, but it would take us too far afield to discuss details here (though we will employ a special case of the main result of~\cite{amin} in the proof of Theorem~\ref{mainnegative}). 
For the particular setting of this paper, we can make the question a bit more concrete:
 \emph{What can be said about expansions, $\mathfrak{R}$, of $\rbar$ by given collections of analytically interesting functions?}
 We have already seen that there are at least three possibilities:  
 (i)~$\mathfrak{R}$ defines $\Z$ (``as wild as possible''); 
 (ii)~$\mathfrak{R}$ is o\nobreakdash-\hspace{0pt}minimal (``as tame as we could reasonably hope for''); 
 (iii)~$\mathfrak{R}$ is interdefinable with a structure of the form $(\mathfrak{R}', \alpha^\Z)$ where $\alpha>1$ and $\mathfrak{R}'$ is o\nobreakdash-\hspace{0pt}minimal and defines no irrational power functions (``as tame as we could reasonably hope for given that $\mathfrak{R}$ defines some $\alpha^\Z$'').  
 One might wish to conjecture that there are no other possibilities, but this would depend on one's definition of ``analytically interesting''.
To illustrate, it is known (Friedman and Miller~\cite{fast} combined with~\cite{cpdmin}) that for each $p\in \N$  there is a $C^p$ function $f\colon \R\to\R$ having zero set $\set{(n!)!: n\in\N}$ such that $(\rbar,f)$ defines neither $\Z$ nor any $\alpha^\Z$ (but is d\nobreakdash-\hspace{0pt}minimal, as defined in~\cite{tameness}); these functions are constructed by differential calculus methods to have prescribed zero sets, and generally are not of any particular interest otherwise.    
Also known (van den Dries~\cite{densepairs}) is that every locally closed set definable in the expansion of $\rbar$ by the characteristic function of the set of real algebraic numbers is definable in $ \rbar$. 
In any case, it strikes us as reasonable to regard classical special functions as analytically interesting.
 
\begin{rem}
The potential trichotomy has also shown up in investigations of expansions of o\nobreakdash-\hspace{0pt}minimal structures on $\rbar$ 
by locally path-connected trajectories of definable vector 
fields (see~\cites{linear,tychndjfl}).
There is a conjecture that the trichotomy does indeed hold in this setting (even in a stronger form), but there are serious obstacles to progress due to long-standing major open problems in vector field theory and transcendental number 
theory.
We are currently writing a survey of this program.
 \end{rem}
  
\subsubsection*{Acknowledgement}
We are very grateful to Ovidiu and Rodica Costin for many useful communications and much help with the classical analysis.
 
\section{Proofs}
We use the symbol $i$ for the usual complex square root of $-1$.
For nonzero $z\in\C$, we take $\arg z\in (-\pi,\pi]$.
We define differential operators $\mathfrak d_m$ inductively by
$\mathfrak d_0f=f$ and $\mathfrak d_{m+1}f=z\frac{d}{dz}(\mathfrak d_m f)$.
Note that $\mathfrak d\log=1$, so we tend to write $\mathfrak d_m\log f$ instead of $\mathfrak d_m(\log f)$.

We can dispose of Theorem~\ref{eulerprods} without further ado.

\begin{proof}[Proof of Theorem~\ref{eulerprods}]
Let $s>1$ and $c\in\R$.
It suffices to show that:
$\mathfrak d \log F_s{\restriction} (c,\infty)$ defines $\Z$ over $\rbar$;
$\mathfrak d_3 \log F_s{\restriction} (c,\infty)$ defines $s^\Z$ over $\rbar$;
and
$(\R_\mathrm{an},s^\Z)$ defines $\mathfrak d_2 \log F_s{\restriction} (c,\infty)$.
For $m>0$, put $a_m=(-s)^m/(s^m-1)$ and $b_m=\operatorname{csch}(2\pi^2m/\log s)$.
By Littlewood~\cite{littlewood}*{p.~395},\footnote{This work was brought to our attention by O.~Costin.} for $x>1$, we have
\[
\log F_s(x)=\frac{(\log x)^2}{2\log s}-\frac{\log x}{2}+\sum_{m>0}\frac{a_mx^{-m}}{m}-\sum_{m>0}\frac{b_m}{2m}\cos (2\pi m\log_s x).
\]
Thus,
\[
\mathfrak d \log F_s(x)=\log_s x-\frac{1}{2}-\sum_{m>0}a_mx^{-m}
+\frac{\pi}{\log s}\sum_{m>0}b_m\sin(2\pi m\log_s x),
\]
\[
\mathfrak d_2 \log F_s(x)=\frac{1}{\log s}+\sum_{m>0}a_m mx^{-m}
+\frac{2\pi^2}{(\log s)^2}\sum_{m>0}b_mm\cos(2\pi m\log_s x),
\]
and
\[
\mathfrak d_3 \log F_s(x)=-\sum_{m>0}a_m m^2x^{-m}
-\frac{4\pi^3}{(\log s)^3}\sum_{m>0}b_m m^2\sin(2\pi m\log_s x).
\]

If $y\in s^\Z$, then
$ 
\lim_{t\to \infty}\left[(\mathfrak d\log F_s)(yt)
-(\mathfrak d\log F_s)(t)\right]=\log_s y\in\Z;
$
if $y\in (0,\infty)\setminus s^\Z$, then the limit does not exist.
Hence, $\mathfrak d \log F_s{\restriction} (0,\infty)$ defines $\Z$ over $\rbar$.
Only the behavior at $\infty$ of $\mathfrak d\log F_s$ is relevant, so this holds also for $\mathfrak d \log F_s{\restriction} (c,\infty)$.

The zero set of the function
$ 
\sum_{m>0}b_mm^2\sin(\pi m x)
$
is equal to $\Z$, and
\[
-\sum_{m>0}a_mm^2x^{-m}=\frac{s}{s-1}\cdot\frac{1}{x}+o(1/x)\ \text{as $x\to\infty$}.
\]
Hence, the zero set of $\mathfrak d_3 \log F_s\circ x^{1/2}{\restriction} (1,\infty)$ is the image of a sequence $(c_k)_{k>0}$ such that
$\lim_{k\to\infty}(c_{k+1}/c_k)=s$.
By asymptotic extraction of groups (see \cite{aph}*{AEG}),
$\mathfrak d_3\log F_s{\restriction} (1,\infty)$ defines $s^\Z$ over $\rbar$.
Again, only the behavior at $\infty$ of $\mathfrak d_3\log F_s$ is relevant.

The restriction of $ \sum_{m>0}b_m m\cos(2\pi m\log_s x)$ to $[1,s]$ is analytic, hence definable in $\R_\mathrm{an}$.
The function $\lambda\colon (0,\infty)\to  \R$ given by $x\mapsto \max\bigl((0,x]\cap s^\Z\bigr)$ is definable in $(\rbar,s^\Z)$, and
$
\cos(2\pi k\log_s x)=\cos(2\pi k\log_s(\lambda(x)/x))
$
for all $x>0$ and $k\in\Z$.
Thus, $(\R_\mathrm{an},s^\Z)$ defines the function
$
\sum_{m>0}b_m m\cos(2\pi m\log_s x)\colon (0,\infty)\to  \R.
$
As $ \sum_{m>0}a_mmx^{m}$ is analytic on $(-1,1)$, the restriction to $[3/2, \infty)$ of
$
 \sum_{m>0}a_mmx^{-m}
$
is definable in $\R_\mathrm{an}$.
Hence, $(\R_\mathrm{an},s^\Z)$ defines
$\mathfrak d_2\log F_s{\restriction} [3/2, \infty)$.
As $\R_\mathrm{an}$ also defines the restriction of
$\mathfrak d_2\log F_s$ to any bounded interval,
we are done.
\end{proof}

\begin{rems}
(a)~Similarly, if $k>3$, then any restriction of $\mathfrak d_k\log F_s$  to any unbounded-above interval defines $s^\Z$.
Of course, we also have that $(\R_{\mathrm{an}},s^\Z)$ defines $\mathfrak d_k\log F_s$.
(b)~We do not know what to say about $(\R_\mathcal G,F_s{\restriction} (-\infty,0))$ except that it expands $(\R_\mathcal G,s^\Z)$ (observe that  $F_s^{-1}(0)=\set{-s^n: n>0}$).
\end{rems}

We now recall some classical analysis of the functions $W_s$ (say, from~\cite{MR1575933}) that is both motivational and technically useful.
First, suppose that $1/s\notin \N$.
Then there is a contour $C_s$ in the Riemann sphere such that
\begin{multline*}
\log W_s=\pi\csc(\pi/s)x^{1/s}+
\sum_{k=1}^{\lfloor 1/s\rfloor}\frac{(-1)^k}{k}\zeta(sk)x^k
-\frac{\log x}{2}-s\log(2\pi)/2
\\
+\frac{1}{2}\int_{C_s}i\zeta(sw)\csc(\pi w)x^w\,\frac{dw}{w}.
\end{multline*}
The last term has the asymptotic (as $x\to\infty$) expansion
\[
\sum_{k>0}\frac{(-1)^{k+1}}{k}\zeta(-sk)x^{-k}
=
\sum_{k>0}\frac{2(-1)^{k}\sin(k\pi s/2)
\Gamma(1+sk)\zeta(1+sk)}{k(2\pi)^{1+sk}}x^{-k}.
\]
Moreover, for each $m\in\N$,
\begin{multline*}
\mathfrak d_{m}(xW_s'/W_s)=
\frac{\pi}{s^{m+1}}\csc(\pi/s)x^{1/s}+\sum_{k=0}^{\lfloor 1/s\rfloor}(-1)^k\zeta(sk)k^{m}x^{k}\\
+\frac{1}{2}\int_{C_s}i\zeta(sw)\csc(\pi w)w^{m}x^w\, dw,
\end{multline*}
and the last term has the asymptotic expansion
$ \sum_{k>0}(-1)^{k+m}\zeta(-sk)k^{m}x^{-k}$.
(We are deliberately omitting any details about the contour $C_s$.)
Thus, in order to understand the expansion of $\rbar$ by $\mathfrak d_{m}(xW_s'/W_s)$, we must understand the expansion of $(\rbar,x^s)$ by the function
\[
x\mapsto \int_{C_s}i\zeta(sw)\csc(\pi w)w^{m}x^w\,dw,\ x>0.
\]
Unfortunately, this formulation
is of rather limited utility for current purposes, and we shall have to employ other approaches.
Nevertheless, we do acquire some intuition.
Clearly, every term of the formula for $\log W_s$ except the integral is definable in $(\rbar,e^x)$, and every term of the formula for $W_s'/W_s$ except the integral is definable in $(\rbar,x^s)$.
The asymptotic expansions of the integral terms
are Gevrey of order $s$ (or perhaps $1/s$, depending on one's conventions);
this is how $\R_\mathcal G$ came to be under consideration.
But caution is in order.
To illustrate, if $s\in2\N$ then the asymptotic expansion is just $0$, so the only direct conclusion we can draw is that the integral function itself either is eventually $0$ (which is easily seen to be false) or exhibits \emph{some} kind of transpolynomially flat behavior at~$\infty$.
We shall need much more precision.

The case $1/s\in\N$ is similar, but there is an important difference.
As now $\sin (\pi/s)=0$, it no longer makes sense to say that the gross asymptotic behavior of $\log W_s$ is controlled by the term $\pi\csc(\pi/s)x^{1/s}$.
But a limiting argument shows that this can be replaced by $a_sx^{1/s}\log x+b_sx^{1/s}$ for some $a_s$ and $b_s$ with $a_s\neq 0$ (and the degree $\lfloor 1/s\rfloor$ term of the polynomial is omitted).
Everything else is the same.
Both $a_s$ and $b_s$ are explicit, but all we need to know here is that $a_s\neq 0$.

\begin{proof}[Proof of Proposition~\ref{easydefines}]
Put $f=\mathfrak d_2\log W_s$ ($=x(xW_s'/W_s)'$).
We show that $(\rbar,f)$ defines $x^s$, and also $e^x$ if $1/s\in\N$.

Suppose that $1/s\notin \N$.
By~\cite{MR1575933} (as explained in the preceding two paragraphs),
there is a nonzero $c\in\R$ such that $x^{-1/s}f\to c$ as $x\to \infty$.
Then $\lim_{t\to\infty}f(yt)/f(t)=y^{1/s}$ for each $y>0$.
Thus, $x^{1/s}$ is definable (hence also $x^s$).

Suppose that $1/s=m\in \N$.
Again by~\cite{MR1575933}, there is a polynomial $p$ and nonzero $c\in\R$ such that $f=p+cx^m\log x+o(x^m\log x)$.
Put $g=(f-p)/cx^m$.
Then for each $y>0$, we have $\lim_{t\to\infty}[g(yt)-g(t)]=\log y$.
Thus, $\log x$ is definable (hence also $e^x$).
\end{proof}

We now declare two auxiliary functions that will be used in the proofs of Theorems~\ref{main} and~\ref{mainnegative}.
For $s>1$ and $x>0$, we put
\[
\phi_s(x)
=
-2\int_0^\infty
\operatorname{Im}\left[\frac{1}{x+(it)^s}\right]\, \frac{dt}{e^{2\pi t}-1}
\]
and
\[
\omega_s(x)=-\frac{4\pi x^{1/s-1}}{s}\sum_{\substack{0<\theta<\pi/2 \\ \cos(s\theta)=-1}}\operatorname{Im}
\left[\frac{e^{i\theta}}
{\exp(-i2\pi x^{1/s}e^{i\theta})-1}\right].
\]
Note that $\phi_s=0$ if $s\in 2\N+2$,  and $\omega_s=0$ if $1<s\leq 2$ (there are then no $\theta$ as described).
While these definitions of $\phi_s$ and $\omega_s$ are perhaps the easiest to write down, they are not in the most useful forms for current purposes.
Hence, note also that 
\[
\phi_s(x)=2
\sin(s\pi/2)
\sum_{n>0}
\int_0^\infty\frac{t^s e^{-2\pi n t}\, dt}{x^2+2
\cos(s\pi/2)
t^sx+t^{2s}}
\]
and 
\[
\omega_s(x)=\frac{4\pi }{s}x^{1/s-1}\sum_{n>0}
\sum_{\substack{0<\theta<\pi/2 \\ \cos(s\theta)=-1}}\frac{\sin(x^{1/s}2\pi n \cos\theta+\theta)}{\exp(x^{1/s}2\pi n\sin\theta)}.
\]
(In each case, use an appropriate geometric series.) 
 
\begin{proposition}\label{maincalc}
If $s>1$, then
\[
\frac{W_s'}{W_s}=
(\pi/s)\csc(\pi/s)
x^{1/s-1}-\frac{1}{2x}+\phi_s+\omega_s+\begin{cases}
0,& s\notin 4\N+2\\
\dfrac{2\pi x^{1/s-1}}{s(e^{2\pi x^{1/s}}-1)},& s\in 4\N+2
\end{cases}
\]
\end{proposition}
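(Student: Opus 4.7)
Since $s>1$ gives $\lfloor 1/s\rfloor=0$, the Weierstrass product reduces to $W_s(z)=\prod_{n>0}(1+z/n^s)$, and term-by-term logarithmic differentiation yields
\[
\frac{W_s'(x)}{W_s(x)}=\sum_{n>0}\frac{1}{n^s+x}\qquad(x>0).
\]
My plan is to evaluate this series by the Abel--Plana summation formula applied to $f(t)=1/(t^s+x)$, a function that is meromorphic on $\C$ with simple poles exactly where $z^s=-x$, and to match the resulting pieces term-by-term with the right-hand side of the claimed formula.

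The pole-free Abel--Plana identity contributes three pieces. The boundary term is $-f(0)/2=-1/(2x)$. The main integral $\int_0^\infty dt/(t^s+x)$ is evaluated by the substitution $u=t^s/x$, giving $(x^{1/s-1}/s)\int_0^\infty u^{1/s-1}/(u+1)\,du=(\pi/s)\csc(\pi/s)\,x^{1/s-1}$ (the $u$-integral converges precisely because $s>1$). For the imaginary correction $i\int_0^\infty[f(it)-f(-it)]/(e^{2\pi t}-1)\,dt$ we use $f(-it)=\overline{f(it)}$ to rewrite the integrand as $-2\operatorname{Im}[1/(x+(it)^s)]/(e^{2\pi t}-1)$, which is exactly $\phi_s(x)$.

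Next I would locate the poles of $f$ in the closed right half-plane. The solutions of $z^s=-x$ are $z_k=x^{1/s}e^{i\theta_k}$ with $\theta_k=\pi(2k+1)/s$ ($k\in\Z$), and $\operatorname{Res}_{z_k}f=-x^{1/s-1}e^{i\theta_k}/s$. The open right half-plane corresponds to $|\theta_k|<\pi/2$, equivalently $\cos(s\theta_k)=-1$ with $0<|\theta_k|<\pi/2$. For $1<s\le 2$ there are no such poles, matching $\omega_s\equiv 0$. For $s>2$ with $s\notin 4\N+2$, summing the Abel--Plana residue contributions $2\pi i\cdot\operatorname{Res}_{z_k}f/(e^{-2\pi i z_k}-1)$ over the conjugate pair $\pm\theta_k\in(0,\pi/2)$ and extracting imaginary parts reproduces exactly $\omega_s(x)$. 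When $s\in 4\N+2$, the index $k=(s-2)/4$ places a pole of $f$ at $z_\star=ix^{1/s}$ on the positive imaginary axis, precisely where the $\phi_s$-integrand becomes singular (consistent with $\sin(s\pi/2)=0$ forcing the principal-value integral $\phi_s$ to vanish); handling this pole via an indented contour (or, equivalently, via a limit from $s\in (4\N+2,\infty)$) restores the contribution $2\pi i\cdot(-ix^{1/s-1}/s)/(e^{2\pi x^{1/s}}-1)=2\pi x^{1/s-1}/(s(e^{2\pi x^{1/s}}-1))$, which is the final term.

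The principal technical difficulty is the careful bookkeeping of signs and factors in the Abel--Plana residue formula, especially in the exceptional case $s\in 4\N+2$ where the $\phi_s$-integral must be understood as a principal value and the pole on the contour contributes its full (not half) residue for continuity in $s$ to hold. As a sanity check I would verify the formula at $s=2$, where $W_2(x)=\sinh(\pi\sqrt x)/(\pi\sqrt x)$ can be logarithmically differentiated directly and the identity $\coth u=1+2/(e^{2u}-1)$ recovers the claimed expression with $\phi_2=\omega_2\equiv 0$.
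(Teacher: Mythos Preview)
Your approach is correct and reaches the same identity, but it is organized differently from the paper's. The paper first applies real-variable Poisson summation to $t\mapsto 1/(x+t^s)$ to obtain
\[
\frac{W_s'}{W_s}=(\pi/s)\csc(\pi/s)\,x^{1/s-1}-\frac{1}{2x}+2\sum_{n>0}\int_0^\infty\frac{\cos(2\pi nt)}{x+t^s}\,dt,
\]
and only then evaluates each Fourier integral by contour-integrating $e^{2\pi i n z}/(x+z^s)$ over a quarter-circle in the first quadrant (or, when $s\in 4\N+2$, by closing in the upper half-plane, since $x+z^s$ is then an even polynomial with no real zeros). You instead apply a residue-corrected Abel--Plana formula directly to $\sum_{n>0}1/(n^s+x)$. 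The paper's own remark after the proof acknowledges that Abel--Plana handles $1<s<2$ almost immediately; your proposal pushes this through for all $s>1$ by tracking the pole corrections, and your residue bookkeeping does match $\omega_s$ and the exceptional $4\N+2$ term. Your route is more unified; the paper's has the advantage that Poisson summation on $\R$ needs no complex-analytic hypothesis on $f$, so the only contour work is on the individual Fourier integrands, where the shrinking-arc estimate at the origin is transparent.

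One point to correct: you describe $f(z)=1/(z^s+x)$ as meromorphic on $\C$ with simple poles exactly where $z^s=-x$. For non-integer $s$ this is false---there is a branch point at $z=0$, and the ``poles'' you locate live on a chosen sheet. The paper's remark flags exactly this, noting that since $z\mapsto 1/(x+z^s)$ is not holomorphic at $0$ one really needs a minor variant of Abel--Plana. Your argument survives once the Abel--Plana contour is indented around the origin (the small-arc contribution vanishes because $f$ is bounded there), but you should say so rather than invoke meromorphy on $\C$.
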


\begin{proof}
Let $s>1$.
First we show that
\[
\frac{W_s'}{W_s}=
(\pi/s)\csc(\pi/s)
x^{1/s-1}-\frac{1}{2x}+2\sum_{n>0}\int_0^\infty\frac{\cos(2\pi n t)\, dt}{x+t^s}.
\]
For each $z\notin \set{-n^s: n>0}$, we have $ 
\frac{W_s'}{W_s}(z)=\sum_{n>0}1/(z+n^s)$.
It is a standard textbook fact that  
$\displaystyle
\int_0^\infty \frac{t^{1/s}\, dt}{t(1+t)}=\pi\csc(\pi/s);
$
a change of variables yields 
$$
\int_0^\infty\frac{dt}{x+t^s}=(\pi/s)\csc(\pi/s)
x^{1/s-1}.
$$
The result now follows by (real-variable) Poisson summation; see, \textit{e.g.}, Titchmarsh~\cite{titchfour}*{\S2.8}.\footnote{But as $s>1$ and $(d/dt)(1/(x+t^s))=-st^{s-1}/(x+t^s)^2$,
it is fair to say that the proof collapses to an exercise in undergraduate analysis.}
 
In what follows, we use the routine fact that if $n\in\N$, $x>0$ and $w\in \C$ is such that $\arg w\neq \pi$ and $x+w^s=0$, then the residue at $w$ of the function
$e^{i2\pi n z}/(x+z^s)$ ($\arg z\neq \pi$) is equal to
$-\frac{1}{s}x^{{1/s}-1}\exp\bigl(i(\arg w+2\pi n w)\bigr)$.

Suppose that $s\notin 4\N+2$.
We show that
$$
2\sum_{n>0}\int_0^\infty\frac{\cos(2\pi n t)\, dt}{x+t^s}=\phi_s+\omega_s.
$$
Let $x>0$.
Let $0<r<R<\infty$ and $C$ be the contour consisting of the union of $[r,R]$, $[ir,iR]$, $\set{re^{i\theta}: 0\leq \theta\leq \pi/2}$,
and $\set{Re^{i\theta}: 0\leq \theta\leq \pi/2}$.
By integrating
$
e^{i2\pi n z}/(x+z^s)
$
over $C$, applying the Residue Theorem, and letting $r\to 0$ and $R\to \infty$,
we obtain
\[
\int_0^\infty\frac{e^{i2\pi nt}}{x+t^s}\, dt=i\int_0^\infty\frac{e^{-2\pi nt}}{x+(it)^s}\, dt -\frac{2\pi ix^{{1/s}-1}}{s}\sum_{\substack{x+w^s=0\\ 0<\arg w<\pi/2}}
\exp\bigl(i(\arg w+2\pi n w)\bigr).
\]
The result now follows by passing to real parts and summing over $n>0$.
 
Suppose that $s\in 4\N+2$.
As $\phi_s=0$, it suffices to show that
\[
2\sum_{n>0}\int_0^\infty\frac{\cos(2\pi n t)\, dt}{x+t^s}=\omega_s+
\frac{2\pi x^{1/s-1}}{s(e^{2\pi x^{1/s}}-1)}.
\]
Because $z\mapsto x+z^s$ is now an even polynomial with no real zeros, we have
\[
2\int_0^\infty\frac{e^{i2\pi n t}\, dt}{x+t^s}=
\int_{-\infty}^\infty \frac{e^{i2\pi n t}\, dt}{x+t^s},
\]
which in turn is equal to $2\pi i$ times the sum of residues of $e^{i2\pi n z}/(x+z^s)$ at the zeros of $x+z^s$ in the upper half plane.
The zeros of $x+z^s$ are symmetric with respect to the imaginary axis and $ix^{1/s}$ is one of them.
Observe that
$$
\sum_{n>0}e^{-2\pi n x^{1/s}}=\frac{1}{e^{2\pi x^{1/s}}-1}
$$
and if $w$ is a zero of $x+z^s$, then so is $-\overline w$, and
$$
\exp \bigl(i(\arg(-\overline w)+2\pi n (-\overline w))\bigr)=-\overline{\exp\bigl(i(\arg w+2\pi n w)\bigr)}.
$$
Recall that $z-\overline{z}=2i\operatorname{Im}z$.
 (We leave the remaining details to the reader.)
\end{proof}

\begin{rem}
For $1<s<2$, the preceding result is almost immediate by the Abel-Plana Formula (see, \textit{e.g.}, \cite{remmert}*{p.~64}) and that $\int_0^\infty 1/(x+t^s)\, dt=(\pi/s)\csc(\pi/s)
x^{1/s-1}$.
But as the function $z\mapsto 1/(x+z^s)$ is not holomorphic at $0$,  it would be more accurate to say ``by a minor variant of Abel-Plana''. 
\end{rem}

\begin{rem}
As $\omega_s$ is an elementary function, so is $W_s'/W_s-\phi_s$.
It follows from standard arguments that if $s>1$, then $W_s$ (as an entire function) is differentially algebraic if and only $\phi_s$ is differentially algebraic.
If $s\in2\N+1$, then by~\cites{bank,bankkauf}, $W_s$ is differentially transcendental (also known as ``hypertranscendental'') over the field of all meromorphic functions $f$ such that
$T(r,f)=o(r^{1/s})$ (where $T(r,f)$ is the Nevanlinna characteristic).
Thus, the collection $\set{\phi_s: s\in 2\N+3}$ is differentially independent over the set of restrictions to the positive real line of entire functions of order $0$.
Among other things, this yields that the germs at $\infty$ of the $\phi_{2n+3}$ all live in a single Hardy field that potentially contains many other interesting analytic germs.
This is a reasonable source of optimism for thinking that the expansion of $\R_\mathrm{an}$ by all of the $W_{2n+1}$ might be o\nobreakdash-\hspace{0pt}minimal.
\end{rem}

The proof of Proposition~\ref{maincalc} required only undergraduate analysis.
This continues through the proof of the first part of the next result, but for the second part, we must assume the reader to understand sections~4.1--4.5 and~5.1  of Balser~\cite{balserbook}.
More precisely, we shall be invoking the conjunction of Proposition~9 and Theorem~22 from~\cite{balserbook}, each of which relies on quite a bit of notation, definitions and conventions.
We see no way to unravel all of this material here for the reader except by rewriting it for our particular needs, which would take us too far afield and add too much length.
For similar reasons, we must assume the reader to have access to~\cite{multisummable}. 

\begin{proposition}\label{intphidef}
If $s>1$, then there exist a differentiable $\Phi_s\colon (0,\infty)\to\R$ and $R>0$ such that $\Phi_s'=\phi_s$ and $\R_\mathcal G$ defines $\Phi_s{\restriction} (R,\infty)$.
\end{proposition}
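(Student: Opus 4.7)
The plan is to explicitly construct $\Phi_s$ as the antiderivative of $\phi_s$ that vanishes at~$+\infty$, then identify its analytic continuation to a sector with the Borel--Laplace sum of an explicitly computable Gevrey-of-order-$s$ formal series in~$1/x$. Definability in~$\R_{\mathcal G}$ will then follow directly from the construction of~$\R_{\mathcal G}$ in~\cite{multisummable}.

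First, I would derive the asymptotic expansion of $\phi_s$ at~$+\infty$ by substituting the truncated geometric expansion $1/(x+(it)^s) = \sum_{k<N}(-1)^k(it)^{sk}/x^{k+1} + R_N(x,t)$ into the integral representation of $\phi_s$ given in Proposition~\ref{maincalc} and integrating termwise against $dt/(e^{2\pi t}-1)$, using the classical identity $\int_0^\infty t^\sigma(e^{2\pi t}-1)^{-1}dt = \Gamma(\sigma+1)\zeta(\sigma+1)/(2\pi)^{\sigma+1}$. This yields
\[
\phi_s(x) \;\sim\; \sum_{k\geq 1} a_k\, x^{-(k+1)}, \qquad a_k = \frac{-2(-1)^k \sin(sk\pi/2)\,\Gamma(sk+1)\,\zeta(sk+1)}{(2\pi)^{sk+1}};
\]
the $k=0$ term vanishes because $\sin 0 = 0$, so in particular $\phi_s(x)=O(x^{-2})$. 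By Stirling, $|a_k|\leq CM^k(k!)^s$, so the formal series is Gevrey of order~$s$. Standard estimates on $R_N$ together with contour rotation extend these bounds to a sector about~$\R_{>0}$.

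Second, set $\Phi_s(x):=-\int_x^\infty\phi_s(u)\,du$; this is well defined by the $O(x^{-2})$ decay, and satisfies $\Phi_s'=\phi_s$ and $\Phi_s(x)\to 0$ as $x\to\infty$. Termwise integration yields $\Phi_s(x)\sim\sum_{k\geq 1}(a_k/k)\,x^{-k}$, again Gevrey of order~$s$, and Fubini gives
\[
\Phi_s(x) = 2\int_0^\infty \arg\bigl(x+(it)^s\bigr)\,\frac{dt}{e^{2\pi t}-1}.
\]
This representation extends $\Phi_s$ holomorphically to a sector $\Sigma$ around $\R_{>0}$ of opening strictly greater than~$\pi$, by deforming the $t$-contour off the rays where $x+(it)^s$ could vanish; the Gevrey bounds from Step~1 persist on $\Sigma$. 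After the change of variable $y=x^{1/s}$ (available in $\R_{\mathcal G}\supseteq \R_{\mathrm{an}}$), the formal series becomes Gevrey of order~$1$ in $1/y$. I would then invoke \cite{balserbook}*{Prop.~9, Thm.~22} to identify $\Phi_s(y^s)$ on $\Sigma$ with the $1$-sum (Borel--Laplace sum) of this series along every direction transverse to the finitely many Stokes rays. By the construction of $\R_{\mathcal G}$ in~\cite{multisummable} as an expansion of $\R_{\mathrm{an}}$ by restrictions to $(0,\infty)$ of such multisummable sums, we conclude $\Phi_s{\restriction}(R,\infty)$ is definable in $\R_{\mathcal G}$ for all sufficiently large~$R$. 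The degenerate case $s\in 2\N+2$, where $\phi_s\equiv 0$ and $\Phi_s$ is constant, is immediate.

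The main obstacle is the identification step: matching, precisely in the sense required by Balser's formalism, the Borel--Laplace sum of the Gevrey-$1$ series in $1/y$ with the analytic continuation of $\Phi_s$ obtained from the Fubini integral, along sectors of the correct opening and transverse to the correct Stokes directions. One must also verify that only a single level (namely~$1$ in the variable $y$) suffices in Balser's sense, despite the a priori possibility that genuine multisummability with several levels could be required for large~$s$; this should follow from the fact that, after the substitution $y=x^{1/s}$, all terms share the single Gevrey scale, so no second level is activated.
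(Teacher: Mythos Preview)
Your overall strategy is right, and your termwise asymptotic computation is correct (it matches the classical expansion the paper quotes). But the identification step you flag as ``the main obstacle'' really is a gap, for two reasons. First, the substitution $y=x^{1/s}$ does not yield a power series in integer powers of $1/y$: you get $\sum_k(a_k/k)\,y^{-sk}$, and for irrational $s$ Balser's Prop.~9 and Thm.~22 do not apply to such series. (Relatedly, $x^{1/s}$ is \emph{not} definable in $\R_\mathcal{G}$ for irrational $s$, since $\R_\mathcal{G}$ has field of exponents $\Q$.) Second, you are invoking those results in the wrong direction. They take a function holomorphic at the origin in the Borel plane (with sectorial bounds) and produce Gevrey derivative estimates on its Laplace transform; they do not, by themselves, identify a given sectorial function with the Borel--Laplace sum of its asymptotic series. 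That identification is Watson's theorem, and for a Gevrey-$s$ series in $1/x$ it requires a sector of opening $>s\pi$ in the $x$-variable---impossible for $s>2$, and in any case your $\arg$-integral representation gives no reason to expect such a wide continuation.

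The paper avoids the identification problem entirely by reversing the order of construction. Rather than integrating $\phi_s$ and then seeking a Borel--Laplace description, it \emph{defines} $\Phi_s=-2\sin(s\pi/2)\,x^{1/s}f_s(x^{1/s})$ where $f_s$ is an explicit Laplace-type integral of the function $h_s(z)=\sum_{n>0}(2\pi n)^{-1}g_s\bigl(z/(2\pi n)^s\bigr)$, with $g_s$ an antiderivative of the rational function $Q_s(w)=(w^2+2\cos(\tfrac{\pi}{2}s)w+1)^{-1}$; a direct differentiation then verifies $\Phi_s'=\phi_s$. The crucial gain is that $h_s$ is holomorphic at the origin, so Balser's results apply in their intended direction to give Gevrey-$s$ derivative bounds on $\mathcal{L}_{1/s}h_s$. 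Composing with $z^N$ for an \emph{integer} $N>s$ then drops the Gevrey order to $s/N<1$ while keeping integer powers, and after rescaling to $(0,1)$ one lands exactly on a unary primitive of $\R_\mathcal{G}$ as described in \cite{multisummable}*{pp.~514--515}. No Watson-type uniqueness is needed, because the Borel-plane object is produced explicitly rather than recovered from the asymptotic expansion.
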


\begin{proof}
The result is trivial if $s\in2\N$, so assume $s\notin 2\N$.
Let $Q_s(w)$ denote the meromorphic function
\[
\frac{1}{w^2+2\cos(\tfrac{\pi}{2}s)w+1}
=\frac{1}{(w-e^{i\pi(1+s/2)})(w-e^{i\pi(1-s/2)})}.
\]
Let $U_s$ be the complement in $\C$ of the set
$\set{te^{i\pi(1\pm s/2)}: t\geq 1}$.
For $z\in U_s$, the line segment from $0$ to $z$ lies in $U_s$; 
let 
$g_s(z)$ be the path integral of $Q_s$ from $0$ to $z$, that is,   
$ 
g_s(z)=z\int_0^1Q_s(tz)\, dt$.
Note that $g_s(0)=0$, $g_s$ is holomorphic, $g_s'=Q_s$ and
\[
\abs{g_s(z)}
\leq \abs{z}
\max\abs{Q_s{\restriction} [0,z]}=\frac{\abs{z}}{\min_{w\in [0,z]}\bigl(\abs{w-e^{i\pi(1+s/2)}}\abs{w-e^{i\pi(1-s/2)}}\bigr)}.
\]
Let $E\subseteq U_s$ be such that the distance of $E$ to the boundary of $U_s$ is positive; then there exists $C_E>0$ such that $\abs{g_s{\restriction} E}\leq C_E\abs{z}$.
Thus, on $E$, we have
\[
\sum_{n>0}\frac{1}{2\pi n}\abs{g_s(z/(2\pi n)^s)}\leq \sum_{n>0}\frac{C_E\abs{z}}{(2\pi n)^{1+s}}=\frac{C_E\zeta(1+s)}{(2\pi)^{1+s}}\abs{z}.
\]
Define $h_s\colon U_s\to \C$ by
$
h_s(z)=\sum_{n>0}(1/(2\pi n))g_s
(z/(2\pi n)^s)
$;
then $h_s(0)=0$, $h_s$ is holomorphic and $h_s/z$ is bounded on $E$.
For $z$ in the open right half-plane, put
$$
f_s(z)=\int_0^\infty h_s(t^s)e^{-tz}\, dt
=\int_0^\infty h_s(t)e^{-t^{1/s}z} dt^{1/s};
$$
then $f_s$ is holomorphic, and
for all $x>0$,
$$
x^{1/s}f_s(x^{1/s})=\sum_{n>0}\int_0^\infty g_s(t^s/x)e^{-2\pi n t}\, dt.
$$
Thus, $x^{1/s}(f_s\circ x^{1/s})$ is differentiable, and
\begin{align*}
\bigl(x^{1/s}f_s(x^{1/s})\bigr)'
&=\sum_{n>0}\int_0^\infty \frac{d}{dx}g_s(t^s/x)e^{-2\pi n t}\, dt\\
&=
\sum_{n>0}\int_0^\infty \frac{-t^sg_s'(t^s/x)}{x^2}e^{-2\pi n t}\, dt\\
&=-\sum_{n>0}\int_0^\infty \frac{t^sQ_s(t^s/x)}{x^2}e^{-2\pi n t}\, dt\\
&=-\sum_{n>0}
\int_0^\infty\frac{t^s e^{-2\pi n t}}{x^2+2
\cos(s\pi/2)t^sx+t^{2s}}\, dt.
\end{align*}
Put $\Phi_s=-2
\sin(s\pi/2)
x^{1/s}(f_s\circ x^{1/s})$;
then $\Phi_s'=\phi_s$.
It suffices now to find $\epsilon >0$ such that $\R_\mathcal G$ defines the restriction of
$x^{-1/s}(f_s\circ x^{-1/s})$ to $(0,\epsilon)$.
 
Let $\alpha_s>0$ be such that $S(0,\alpha_s,\infty)$ (as defined in~\cite{balserbook}*{4.1}) is the maximal open sector contained in $U_s$ (as before) that contains the positive real axis.
It follows from our earlier computations on the sets $E$ that
$
h_s{\restriction} S(0,\alpha_s,\infty) \in \boldsymbol{A}^{(1/s)}\bigl(S(0,\alpha_s,\infty),\C\bigr)
$.
Let $\mathcal L_{1/s}h_s$ be the Laplace transform of order $1/s$ of $h_s{\restriction}S(0,\alpha_s,\infty)$, defined in a corresponding sectorial region $G=G(0,\alpha_s+\pi s)$.
Recall that $h_s$ is holomorphic at $0$.
By combining~\cite{balserbook}*{Theorem~22} (take $s_1=0$ and $s_2=s$) and~\cite{balserbook}*{Proposition~9},
we obtain that all derivatives of $\mathcal L_{1/s}h_s$ are continuous at the origin, and for every
closed subsector $\overline S$ of $G$ there exist $c,K>0$ such that
\[
\forall n\in\N,\ \frac{1}{n!}\sup_{z\in \overline S}\abs{(\mathcal L_{1/s}h_s)^{(n)}(z)}\leq cK^n\Gamma(1+ns).
\]
Let $N\in\N$ be such that $s<N$ and put $H_s=\mathcal L_{1/s}h\circ z^N$; then again all derivatives of $H_s$
are continuous at the origin and for every
closed subsector $\overline S$ of $\set{z^{1/N}: z\in G}$ there exist $c,K>0$ such that
\[
\forall n\in\N,\ \frac{1}{n!}\sup_{z\in \overline S}\abs{H_s^{(n)}(z)}\leq cK^n\Gamma(1+ns/N).
\]
By Stirling's formula (and adjusting $c$ and $K$), we may replace $\Gamma(1+ns/N)$ with $(n!)^{s/N}$.
Thus, there exist $\varphi\in (\pi/2,\pi)$ and $A,B,\rho>0$ such that for all $n\in\N$ and $z\in \C$, if $0<\abs{z}<\rho$ and $\abs{\arg z}<(s/N)\varphi$, then
$\abs{H_s^{(n)}(z)}/n!\leq AB^n(n!)^{s/N}$.
Choose $b>0$ such that this holds with $\rho>1$ for $H_s\circ bz$.
It follows that
the restriction of $(bz)^{-s/N}f_s\circ (bz)^{-s/N}$ to $(0,1)$ is equal to the restriction to $(0,1)$ of a primitive unary function of $\R_\mathcal G$ (see~\cite{multisummable}*{pp.~514--515}).
Hence, there exists $\epsilon >0$ such that $\R_\mathcal G$ defines
$x^{-1/s}(f_s\circ x^{-1/s}){\restriction}(0,\epsilon)$, as was to be shown.
\end{proof}

The amount of work performed above is not atypical for showing that some given function $(0,\epsilon) \to\R$ is definable in $\R_\mathcal G$ (unless it is globally subanalytic, in which case it is definable in $\R_{\mathrm{an}}$).
Theorem~A of~\cite{multisummable} makes clear that the key is to understand the unary primitive functions of $\R_\mathcal G$, but as we have now seen, this can be difficult.
Another example is from~\cite{multisummable}:
The function $\log \Gamma(x)-(x-1/2)\log x$ ($x>1$)
is definable in $\R_\mathcal G$ (this is used in the proof that $(\R_\mathcal G,e^x)$ defines $\Gamma{\restriction}(0,\infty)$);
the work underlying this assertion is also considerable, but it is not shown in~\cite{multisummable} because it appears in the classical literature 
(\textit{e.g.}, Nielsen~\cite{nielsen}). 
A more modern account can be found in Sauzin~\cite{sauzin}*{\S11}.

\begin{proof}[Proof of Theorem~\ref{main}]
Let $1<s<2$.
We show that $(\R_\mathcal G,x^s)$ defines $W_s'/W_s$ and
$(\R_\mathcal G,e^x)$ defines $W_s$.
Let $\Phi_s$ and $R$ be as in Proposition~\ref{intphidef}.

As $(W_s'/W_s)(z)$ is holomorphic off $\set{-n^s: n>0}$, its restriction to $(0,R]$ is definable in $\R_\mathrm{an}$.
Thus, it suffices to show that $(\R_\mathcal G,x^s)$ defines
$W_s'/W_s{\restriction}(R,\infty)$.
By Proposition~\ref{maincalc},
we have
\[
\frac{W_s'}{W_s}=
(\pi/s)\csc(\pi/s)
x^{1/s-1}-\frac{1}{2x}+\phi_s
\]
($\omega_s=0$ because $1<s<2$).
As $\R_\mathcal G$ defines $\Phi_s{\restriction}(R,\infty)$, it also defines $\phi_s{\restriction}(R,\infty)$ (because $\phi_s=\Phi_s'$).
Hence, $(\R_\mathcal G,x^s)$ defines $W_s'/W_s{\restriction}(R,\infty)$, as was to be shown.

Now we show that $(\R_\mathcal G,e^x)$ defines $W_s$.
By the preceding paragraph,  
\[
\log W_s=
\pi
\csc(\pi/s)
x^{1/s}-\frac{\log x}{2}+\Phi_s+c_s
\]
where $c_s=\log W_s(1)-\pi\csc(\pi/s)+2
\sin(s\pi/2)f_s(1)$. 
Since $\R_\mathcal G$ defines $\Phi_s{\restriction} (R,\infty)$, it is immediate that $(\R_\mathcal G,e^x)$ defines $\log W_s{\restriction} (R,\infty)$, hence also $W_s{\restriction} (R,\infty)$.
Recall that $\R_\mathrm{an}$ defines $W_s{\restriction} (0,R]$.
\end{proof}

\begin{rem}
Put $W(s,z)=W_s(z)$ for $s>0$ and $z\in \C$. 
Given Theorem~\ref{main}, it is natural to wonder about the expansion of $\R_\mathcal{G}$ by the restriction of $W$ to $(1,2)\times (0,\infty)$,
but we do not yet have a satisfactory answer. 
By the proof of Theorem~\ref{main} and some routine definability tricks (recall the proof of Proposition~\ref{easydefines}), 
the expansion of $\rbar$ by $W{\restriction} ((1,2)\times (0,\infty))$ is interdefinable with 
the expansion of 
$
(\rbar, e^x,\arctan x)
$
by the functions $s\mapsto  c_s\colon (1,2)\to\R$  and 
$
(s,x)\mapsto \Phi_s(x) \colon (1,2)\times (0,\infty)\to\R
$.
By~\cite{MR1575933}, it is reasonable to suspect that $c_s=-s\log(2\pi)/2$, but it strikes us as more pressing to understand  $(s,x)\mapsto \Phi_s(x)$, a challenge that we are not yet ready to confront.
\end{rem}

\begin{proof}[Proof of Theorem~\ref{maincor}]
Recall that $W_1$ is interdefinable over $\rbar$ with $\Gamma{\restriction} (0,\infty)$, and 
$W_2$ is interdefinable over $\rbar$ with $e^x$.
Hence, by~\cite{multisummable} and Theorem~\ref{main}, $(\R_\mathcal G,(W_s)_{1\leq s\leq 2})$ is interdefinable with $(\R_\mathcal G,e^x)$, which has analytic cell decomposition (\cite{multisummable}*{9.4}).
By~\cite{kk}, $\operatorname{Th}(\R_\mathcal G,e^x)$ is levelled.
Of course, $(\rbar, e^x)$ expands $(\rbar, (x^r)_{r\in\R})$.
Let $S\subseteq (1,2)$.
By Proposition~\ref{easydefines} and Theorem~\ref{main},
$(\R_\mathcal G, (W_s'/W_s)_{s\in S})$ is interdefinable with $(\R_\mathcal G, (x^s)_{s\in S})$.
By~\cite{multisummable}*{Theorem~A}, $\R_\mathcal G$ has field of exponents $\Q$ (and is o\nobreakdash-\hspace{0pt}minimal).
As $\R_\mathcal G$ expands $\R_\mathrm{an}$, it defines all restricted powers $x^r{\restriction} [1,2]$, $r\in\R$.
By~\cite{linear}*{4.1} (or see~\cite{hardyomin}*{\S5} if a more detailed proof is desired), $(\R_\mathcal G, (x^r)_{r\in \Q(S)})$ has field of exponents $\Q(S)$.
Finally, $(\R_\mathcal G, (x^s)_{s\in S})$ has analytic cell decomposition by~\cite{linear}*{4.1} and essentially the same proof as for $(\R_\mathcal G,e^x)$.
\end{proof}

\begin{rem}
As $\R_\mathcal G$ is model complete (\cite{multisummable}*{Theorem~A}), so is $(\R_\mathcal G, (x^s)_{s\in S})$ (an easy consequence of~\cite{linear}*{4.1}).
\end{rem}

We are ready for the proof of Theorem~\ref{mainnegative}, but
as a convenience to the reader, we first explain a certain technical result that we shall use.
For $\emptyset\neq X\subseteq \R$, we say that $X$ has Assouad dimension~$0$ if for each $\epsilon >0$ there exists $C_\epsilon >0$ such that, for all $0<r<R$ and $x\in X$, the number of intervals of length $2r$ needed to cover $X\cap (x-R,x+R)$ is at most $C_\epsilon (R/r)^\epsilon$.
As a special case of~\cite{amin}*{Theorem~A},
if $E\subseteq \R$ is a finite union of countable locally closed sets, and $f\colon E\to\R$ is continuous and $f(E)$ does not have Assouad dimension~$0$, then $(\rbar, f)$ defines $\Z$.

\begin{proof}
[Proof of Theorem~\ref{mainnegative}]
Let $s>0$ and $c\geq 0$.

Suppose that $2<s\notin 4\N+2$.
We show that $(xW_s'/W_s)'{\restriction} (c,\infty)$ defines $\Z$ over $\R_\mathcal G$.
By Proposition~\ref{easydefines}, $(xW_s'/W_s)'{\restriction} (c,\infty)$ defines $x^{1/s}$ over $\rbar$.
By Propositions~\ref{maincalc} and~\ref{intphidef}, it suffices to show that the zero set of $(x\omega_s)'{\restriction} (c,\infty)$ defines $\Z$ over $\rbar$.
By~\cite{amin}, it suffices to show the zero set of $(x\omega_s)'{\restriction} (c,\infty)$ does not have Assouad dimension~$0$.
The denominators in the double sum appearing in the alternate form of $\omega_s$ are minimized when $n=1$ and $\theta=\pi/s$.
Hence,
\[
\frac{s}{4\pi x^{1/s}}\exp\bigl(x^{1/s}2\pi \sin(\pi/s)\bigr)x\omega_s=
\sin\bigl(x^{1/s}2\pi\cos(\pi/s)+\pi/s
\bigr)+o(1/x^{1/s}).
\]
As there is at least one zero of $(x\omega_s)'$ between each pair of consecutive zeros of
$x\omega_s$, the zero set of $(x\omega_s)'$ is distributed at least as densely as that of
\[
x\mapsto \sin\bigl(x^{1/s}2\pi\cos(\pi/s)+\pi/s\bigr).
\]
The result follows.

Suppose that $s\in 4\N+2$.
We contend that $(xW_s'/W_s)'{\restriction}(c,\infty)$ defines $\Z$ over $\rbar$.
The proof is similar enough to the preceding case that we leave the details to the reader (but recall that $\phi_s=0$ because $s$ is even).

Suppose that $s\neq 2$.
We show that $(\R_\mathrm{an},e^x)$ does not define $(xW_s'/W_s)'{\restriction} (c,\infty)$.
This is clear from the preceding two paragraphs if $s$ is an even integer, so suppose also that $s\notin 2\N$.
By arguing as in the proof of Proposition~\ref{easydefines},
there is a unary function $h$ definable in $(\rbar,e^x)$ such that $(xW_s'/W_s)'-h$ has the asymptotic expansion
$\sum (-1)^{k+1}k\zeta(-sk)x^{-k-1}$.
Thus, it suffices by van den Dries,
Macintyre and Marker~\cite{dmm2}*{5.5}
to show that the formal series
$  \sum k\abs{\zeta(-sk)}T^k$
is divergent.
If otherwise, then
$\limsup_{k\to\infty}\abs{\zeta(-sk)}^{1/k}<\infty$.
Via the Riemann functional equation,
$\lim_{k\to\infty}\abs{\sin(k\pi s/2)}^{1/k}=0$.
But then the subgroup of the unit circle generated by $e^{is\pi/2}$ has no elements with nonzero imaginary part, and so $s/2\in\N$, contradicting that $s\notin 2\N$.
\end{proof}

\section*{Concluding remarks}

The functions $W_s$ can be regarded as special cases of canonical products for more general sequences with terms such as $-(a+bn_1+\dots b_Nn_N)^s$ for suitably chosen real numbers $a,b_1,\dots,b_N$. 
Indeed, yet more general forms are treated in considerable detail in~\cites{MR1575933}.
We imagine that results similar to ours could hold for many of these functions (especially if $N=1$) but the computational details could be daunting.

We do not yet understand the $W_s$ for $0<s<1$ as well as we would like, especially if $1/s\in\N$ (but recall Proposition~\ref{easydefines}).
By arguments similar to those in the proof of Proposition~\ref{maincalc}, the problem reduces to understanding 
$$
x\mapsto \sum_{n>0}\int_1^\infty\frac{\cos(2\pi n t)\, dt}{t^{s\lfloor 1/s\rfloor}(x+t^s)}
$$ 
for large $x$, but we have not yet been able to do so.
We do have reason to suspect that there is a polynomially bounded o\nobreakdash-\hspace{0pt}minimal expansion $\mathfrak{R}$ of $\R_\mathcal G$ such that
the conclusion of Theorem~\ref{main} should hold with $\mathfrak{R}$ in place of $\R_\mathcal G$ if $-1<\cos(\pi/s)<0$, and the conclusion of
Theorem~\ref{mainnegative} should hold with $\mathfrak{R}$ in place of $\R_\mathcal G$ if 
$0< \cos(\pi/s)<1$.

An important difference between the $W_s$ and $F_s$ (as in Theorem~\ref{eulerprods}) is that every $W_s$ has finite positive order (as an entire function) while every $F_s$ has order $0$.
We expect that order~$0$ products will generally be poorly behaved.
Indeed, if $(a_n)_{n\geq 1}$ is a sequence of positive real numbers such that $\liminf_{n\to\infty}a_{n+1}/a_n\geq 100$, then $\prod(1+x/a_n)$ defines $\Z$ over $\rbar$.\footnote{This is a special case of a result due to O.~Costin and author Miller that arose as part of an ongoing project on Hardy fields generated by canonical products. 
The value $100$ is chosen only for computational convenience and proof of concept; it is probably too large.}
We sketch the proof.
Put $W(z)=\prod(1+z/a_n)$ and let $c\geq 0$.
We show that $W'/W{\restriction} (c,\infty)$ defines $\Z$ over $\rbar$.
Put $f=xW'/W$ and let $Z$ be the zero set of $\mathfrak{d}_3\log W{\restriction} (c,\infty)$.
Observe that $f$ is continuous and $Z$ is discrete.
It suffices now by~\cite{amin} to show that $f(Z)$ does not have Assouad dimension~$0$.
If $0<k\in\N$, then $k/2<\sum_{n\leq k}1/(1+a_n/a_k)<k(1+a_1/a_k)$.
Thus, for each $\epsilon \in (0,1/2)$ and sufficiently large $k$ (depending on $\epsilon$), $f$ maps the interval $(a_k,a_{k+1})$ into the interval $((1/2-\epsilon)k, (1+\epsilon)k)$.
As $f'>0$, the images $f\bigl((a_k,a_{k+1})\bigr)$ are pairwise disjoint.
It is a calculus exercise that
for all sufficiently large $k$ (depending only on $W$)
there is an element of $Z$ in $(a_k,a_{k+1})$.\footnote{While not easy, it can be given to undergraduate students with the hints to sketch the graph of $\mathfrak{d}_3\log (1+x/a)$ for fixed $a>0$,
then use that $\mathfrak{d}_3\log W$ is the sum of the $\mathfrak{d}_3\log (1+x/a_n)$ and that the $a_n$ are growing very rapidly.}
Thus, given $\epsilon\in (0, 1/2)$,
for all sufficiently large $k$ (depending on $\epsilon$) there exists $b_k\in f(Z)$ such that $(1/2-\epsilon)k<b_k<(1+\epsilon)k$.
It follows that $f(Z)$ does not have Assouad dimension~$0$ (as was to be shown).
Observe that for $s\geq 100$, we have proven that $F_s'/F_s$ defines $\Z$ over $\rbar$ (recall Theorem~\ref{eulerprods}) without using complex analysis.
On the other hand, as no $\mathfrak d_3 \log F_s$ defines $\Z$ even over $\R_\mathcal G$,
it can happen that $\mathfrak d_3 \log W$ does not define $\Z$ over $\rbar$ by itself.

One can also consider real-on-real products that might have infinitely many nonreal zeros, but this tends to go beyond the methods of this paper.
Indeed, prompted by the introduction, we ask: 
What can be said about the expansion of $\R_\mathcal G$ by $\zeta{\restriction} (1,\infty)$?
We conjecture that it is o\nobreakdash-\hspace{0pt}minimal.  
Something that is known: The expansion of $\R_\mathrm{an}$ by $\zeta{\restriction} (1,\infty)$ is o\nobreakdash-\hspace{0pt}minimal; indeed, by \cite{multisummable}, this holds over $\R_{\mathrm{an}^*}$ (see~\cite{genpower} for the definition), another polynomially bounded o\nobreakdash-\hspace{0pt}minimal expansion of $\R_\mathrm{an}$.
Ongoing work of Speissegger with J.-P.~Rolin and T.~Servi is aimed at proving that $(\R_\mathcal  G,\zeta{\restriction} (1,\infty))$ is o\nobreakdash-\hspace{0pt}minimal by establishing that $\R_\mathcal G$ and $\R_{\mathrm{an}*}$ have a common polynomially bounded o\nobreakdash-\hspace{0pt}minimal expansion.
It is worth noting  that $(\rbar,\zeta{\restriction} (1,\infty))$ defines $e^x$ (consider $x\mapsto \lim_{t\to\infty}(\zeta(t)-1)/(\zeta(x+t)-1)$), and so $(\R_\mathcal  G,\zeta{\restriction} (1,\infty))$ 
defines $\Gamma{\restriction}(0,\infty)$.
But as mentioned in the introduction, $\zeta{\restriction} (1,\infty)$ is not definable in $(\R_\mathcal  G, e^x)$, hence also not in $(\R_\mathcal  G, \Gamma{\restriction}(0,\infty))$.
Thus, relative to $\R_\mathcal  G$,  $\zeta{\restriction} (1,\infty)$ carries more information than $\Gamma{\restriction}(0,\infty)$. 

We close with a brief return to our main conjecture and the tameness program. 
Forgetting Theorems~\ref{main} through~\ref{mainnegative} for the moment,
let $S\subseteq (0,\infty)$.
Can it be that $(\R_\mathrm{an},(W_s)_{s\in S})$ neither is o\nobreakdash-\hspace{0pt}minimal nor defines $\Z$?
If o\nobreakdash-\hspace{0pt}minimality fails, then
by~\cite{opencore} and~\cite{hier2}*{Lemma~2} there is a definable infinite discrete $E\subseteq (0,\infty)$ such that if $x,y\in E$ and $x\neq y$, then $\abs{x-y}\geq 1$.
If definability of $\Z$ fails, then $E$ has Assouad dimension~$0$, as does the image of $E$ under each finite compositional iterate of the compositional inverse $W_s^{-1}$ of $W_s$ for each $s\in S$.
As $W_s^{-1}$ grows roughly like $(\log x)^{s}$, this suggests that $E$ should be somehow ``transexponentially sparse''.
(We should point out that we have used here only a rather special case of the main result of~\cite{amin}.)
This seems unlikely, particularly if $S\subseteq 2\N+1$, as then the set of germs of the $W_{2n+1}$ generate a Hardy field (by results from~\cites{bank,bankkauf}).
More generally, let $F$ be a nonpolynomial entire function and $M_F$ denote its maximum-modulus function.
(Note that if $s>1$ then $M_{W_s}=W_s{\restriction} [0,\infty)$ and $M_{F_s}=F_s{\restriction} [0,\infty)$.)
What can be said about $(\R_\mathrm{an}, M_F)$?
As every restriction of $F$ to a compact disc is definable in $\R_\mathrm{an}$, so is the restriction of $M_F$ to any compact subinterval of $[0,\infty)$.
Thus, once again, the point is to understand the behavior of $M_F$ at $\infty$.
Recall that $M_F$ is strictly increasing and transpolynomial (by the Maximum Principle and Liouville's Theorem).
By arguing similarly as before, it seems likely that $(\R_\mathrm{an}, M_F)$ either is o\nobreakdash-\hspace{0pt}minimal or defines $\Z$ (and any other possibilities should be rather esoteric).

\bibsection
\begin{biblist}[\small]

% WE HAVE NO OBJECTION TO INCLUDING MRs.

\bib{balserbook}{book}{
   author={Balser, Werner},
   title={Formal power series and linear systems of meromorphic ordinary
   differential equations},
   series={Universitext},
   publisher={Springer-Verlag, New York},
   date={2000},
   pages={xviii+299},
   isbn={0-387-98690-1},
   review={\MR{1722871}},
}

\bib{bank}{article}{
   author={Bank, Steven B.},
   title={On certain canonical products which cannot satisfy algebraic
   differential equations},
   journal={Funkcial. Ekvac.},
   volume={23},
   date={1980},
   number={3},
   pages={335--349},
   issn={0532-8721},
   review={\MR{621538}},
}

\bib{bankkauf}{article}{
   author={Bank, Steven B.},
   author={Kaufman, Robert P.},
   title={An extension of H\"{o}lder's theorem concerning the Gamma function},
   journal={Funkcial. Ekvac.},
   volume={19},
   date={1976},
   number={1},
   pages={53--63},
   issn={0532-8721},
   review={\MR{0419862}},
}

\bib{MR1575933}{article}{
   author={Barnes, E.W.},
   title={The Asymptotic Expansion of Integral Functions of Finite Non-Zero
   Order},
   journal={Proc. London Math. Soc. (2)},
   volume={3},
   date={1905},
   pages={273--295},
   issn={0024-6115},
   review={\MR{1575933}},
   doi={10.1112/plms/s2-3.1.273},
}	

\bib{densepairs}{article}{
   author={van den Dries, Lou},
   title={Dense pairs of o-minimal structures},
   journal={Fund. Math.},
   volume={157},
   date={1998},
   number={1},
   pages={61--78},
   issn={0016-2736},
   review={\MR{1623615}},
}	

\bib{dmm2}{article}{
   author={van den Dries, Lou},
   author={Macintyre, Angus},
   author={Marker, David},
   title={Logarithmic-exponential power series},
   journal={J. London Math. Soc. (2)},
   volume={56},
   date={1997},
   number={3},
   pages={417--434},
   issn={0024-6107},
   review={\MR{1610431}},
   doi={10.1112/S0024610797005437},
}

\bib{geocat}{article}{
   author={van den Dries, Lou},
   author={Miller, Chris},
   title={Geometric categories and o-minimal structures},
   journal={Duke Math. J.},
   volume={84},
   date={1996},
   number={2},
   pages={497--540},
   issn={0012-7094},
   review={\MR{1404337}},
   doi={10.1215/S0012-7094-96-08416-1},
}

\bib{genpower}{article}{
   author={van den Dries, Lou},
   author={Speissegger, Patrick},
   title={The real field with convergent generalized power series},
   journal={Trans. Amer. Math. Soc.},
   volume={350},
   date={1998},
   number={11},
   pages={4377--4421},
   issn={0002-9947},
   review={\MR{1458313}},
   doi={10.1090/S0002-9947-98-02105-9},
}

\bib{multisummable}{article}{
   author={van den Dries, Lou},
   author={Speissegger, Patrick},
   title={The field of reals with multisummable series and the exponential
   function},
   journal={Proc. London Math. Soc. (3)},
   volume={81},
   date={2000},
   number={3},
   pages={513--565},
   issn={0024-6115},
   review={\MR{1781147}},
   doi={10.1112/S0024611500012648},
}
 
\bib{MR0115035}{book}{
   author={Ford, Walter B.},
   title={Studies on divergent series and summability \& The asymptotic
   developments of functions defined by Maclaurin series},
   publisher={Chelsea Publishing Co., New York},
   date={1960},
   pages={x+342},
   review={\MR{0115035}},
}

\bib{fast}{article}{
   author={Friedman, Harvey},
   author={Miller, Chris},
   title={Expansions of o-minimal structures by fast sequences},
   journal={J. Symbolic Logic},
   volume={70},
   date={2005},
   number={2},
   pages={410--418},
   issn={0022-4812},
   review={\MR{2140038}},
   doi={10.2178/jsl/1120224720},
}

\bib{hier}{article}{
    author={Hieronymi, Philipp},
   title={Defining the set of integers in expansions of the real field by a
   closed discrete set},
   journal={Proc. Amer. Math. Soc.},
   volume={138},
   date={2010},
   number={6},
   pages={2163--2168},
   issn={0002-9939},
   review={\MR{2596055}},
   doi={10.1090/S0002-9939-10-10268-8},
 }

\bib{hier2}{article}{
   author={Hieronymi, Philipp},
   title={Expansions of subfields of the real field by a discrete set},
   journal={Fund. Math.},
   volume={215},
   date={2011},
   number={2},
   pages={167--175},
   issn={0016-2736},
   review={\MR{2860183}},
   doi={10.4064/fm215-2-4},
}

\bib{amin}{article}{
   author={Hieronymi, Philipp},
   author={Miller, Chris},
   title={Metric dimensions and tameness in expansions of the real field},
   status={to appear},
   journal={Trans. Amer. Math. Soc.},
%   volume={},
%   date={},
%   number={},
%   pages={--},
%   issn={}, 
%   review={},
%  doi={},
}

\bib{kechris}{book}{
   author={Kechris, Alexander S.},
   title={Classical descriptive set theory},
   series={Graduate Texts in Mathematics},
   volume={156},
   publisher={Springer-Verlag, New York},
   date={1995},
   pages={xviii+402},
   isbn={0-387-94374-9},
   review={\MR{1321597}},
   doi={10.1007/978-1-4612-4190-4},
}

\bib{kk}{article}{
   author={Kuhlmann, Franz-Viktor},
   author={Kuhlmann, Salma},
   title={Valuation theory of exponential Hardy fields. I},
   journal={Math. Z.},
   volume={243},
   date={2003},
   number={4},
   pages={671--688},
   issn={0025-5874},
   review={\MR{1974578}},
   doi={10.1007/s00209-002-0460-4},
}

\bib{littlewood}{article}{
   author={Littlewood, J. E.},
   title={On the Asymptotic Approximation to Integral Functions of Zero
   Order},
   journal={Proc. London Math. Soc. (2)},
   volume={5},
   date={1907},
   pages={361--410},
   issn={0024-6115},
   review={\MR{1577343}},
   doi={10.1112/plms/s2-5.1.361},
}

\bib{levelled}{article}{
   author={Marker, David},
   author={Miller, Chris},
   title={Levelled o-minimal structures},
   note={Real algebraic and analytic geometry (Segovia, 1995)},
   journal={Rev. Mat. Univ. Complut. Madrid},
   volume={10},
   date={1997},
   number={Special Issue, suppl.},
   pages={241--249},
   issn={0214-3577},
   review={\MR{1485302}},
}

\bib{tameness}{article}{
    author={Miller, Chris},
   title={Tameness in expansions of the real field},
   conference={
      title={Logic Colloquium '01},
   },
   book={
      series={Lect. Notes Log.},
      volume={20},
      publisher={Assoc. Symbol. Logic, Urbana, IL},
   },
   date={2005},
   pages={281--316},
   review={\MR{2143901}},
%%% WARNING: amsrefs does not treats this DOI properly?  
   %doi={10.1017/9781316755860.012}
}

\bib{aph}{article}{
   author={Miller, Chris},
   title={Avoiding the projective hierarchy in expansions of the real field
   by sequences},
   journal={Proc. Amer. Math. Soc.},
   volume={134},
   date={2006},
   number={5},
   pages={1483--1493},
   issn={0002-9939},
   review={\MR{2199196}},
   doi={10.1090/S0002-9939-05-08112-8},
}

\bib{linear}{article}{
   author={Miller, Chris},
   title={Expansions of o-minimal structures on the real field by
   trajectories of linear vector fields},
   journal={Proc. Amer. Math. Soc.},
   volume={139},
   date={2011},
   number={1},
   pages={319--330},
   issn={0002-9939},
   review={\MR{2729094}},
   doi={10.1090/S0002-9939-2010-10506-3},
}

\bib{hardyomin}{article}{
   author={Miller, Chris},
   title={Basics of o-minimality and Hardy fields},
   conference={
      title={Lecture notes on o-minimal structures and real analytic
      geometry},
   },
   book={
      series={Fields Inst. Commun.},
      volume={62},
      publisher={Springer, New York},
   },
   date={2012},
   pages={43--69},
  review={\MR{2976990}},
%%% WARNING: amsrefs does not treats this DOI properly.
   %doi={10.1007/978-1-4614-4042-0_2},
}

\bib{opencore}{article}{
   author={Miller, Chris},
   author={Speissegger, Patrick},
   title={Expansions of the real line by open sets: o-minimality and open
   cores},
   journal={Fund. Math.},
   volume={162},
   date={1999},
   number={3},
   pages={193--208},
   issn={0016-2736},
   review={\MR{1736360}},
}

\bib{cpdmin}{article}{
   author={Miller, Chris},
   author={Thamrongthanyalak, Athipat},
   title={D-minimal expansions of the real field have the zero set property},
   journal={Proc. Amer. Math. Soc.},
   volume={146},
   date={2018},
   number={12},
   pages={5169--5179},
   issn={0002-9939},
   review={\MR{3866856}},
   doi={10.1090/proc/14144},
}

\bib{nielsen}{book}{
   author={Nielsen, Niels},
   title={Die Gammafunktion. Band I. Handbuch der Theorie der Gammafunktion.
   Band II. Theorie des Integrallogarithmus und verwandter Transzendenten},
   language={German},
   publisher={Chelsea Publishing Co., New York},
   date={1965},
   pages={Band I: x+326 pp. Band II: vi+106},
   review={\MR{0185152}},
}

\bib{remmert}{book}{
   author={Remmert, Reinhold},
   title={Classical topics in complex function theory},
   series={Graduate Texts in Mathematics},
   volume={172},
   note={Translated from the German by Leslie Kay},
   publisher={Springer-Verlag, New York},
   date={1998},
   pages={xx+349},
   isbn={0-387-98221-3},
   review={\MR{1483074}},
   doi={10.1007/978-1-4757-2956-6},
}

\bib{sauzin}{article}{
   author={Sauzin, David},
   title={Introduction to 1-summability and resurgence},
   eprint={arXiv:1405.0356},
}

\bib{pfaffcl}{article}{
   author={Speissegger, Patrick},
   title={The Pfaffian closure of an o-minimal structure},
   journal={J. Reine Angew. Math.},
   volume={508},
   date={1999},
   pages={189--211},
   issn={0075-4102},
   review={\MR{1676876}},
   doi={10.1515/crll.1999.026},
}

\bib{titchfour}{book}{
   author={Titchmarsh, E. C.},
   title={Introduction to the theory of Fourier integrals},
   edition={2},
   publisher={Oxford Univ. Press, London and New York},
   date={1948},
   pages={x+394},
   isbn={0-8284-0324-4},
  review={\MR{942661}},
}

\bib{tychndjfl}{article}{
   author={Tychonievich, Michael A.},
   title={The set of restricted complex exponents for expansions of the
   reals},
   journal={Notre Dame J. Form. Log.},
   volume={53},
   date={2012},
   number={2},
   pages={175--186},
   issn={0029-4527},
   review={\MR{2925275}},
   doi={10.1215/00294527-1715671},
}

\bib{tychthesis}{book}{
   author={Tychonievich, Michael A.},
   title={Tameness results for expansions of the real field by groups},
   note={Thesis (Ph.D.)--The Ohio State University},
   publisher={ProQuest LLC, Ann Arbor, MI},
   date={2013},
   pages={65},
   isbn={978-1321-51344-8},
   review={\MR{3322030}},
}

\end{biblist}
\end{document}